\title{Symmetric correspondences on quadrics}
\date{14 April 2017}
\subjclass[2010]{14C25 ;  11E39.}
\author{Rapha\"{e}l Fino}
\address
{Instituto de Matem\'{a}ticas \\
Ciudad Universitaria\\
UNAM\\
DF 04510, M\'{e}xico}
\address
{{\it Web page:}
{\tt http://www.matem.unam.mx/fino}}
\email {fino {\it at} im.unam.mx}
\numberwithin{equation}{section}
\theoremstyle{definition}
\newtheorem{rem}[equation]{Remark}
\newtheorem{lemme}[equation]{Lemma}
\newtheorem{prop}[equation]{Proposition}
\newtheorem{thm}[equation]{Theorem}
\newtheorem{cor}[equation]{Corollary}
\begin{document}

\begin{abstract}
We prove a result comparing the rationality of some \textit{elementary}
algebraic cycles introduced by Alexander Vishik, defined on orthogonal grassmannians, with the rationality
of some algebraic cycles defined on fiber products of the corresponding quadric.
%It is then applied to relate the rationality of these elementary cycles with the first Witt index of the quadric.

\smallskip
\noindent \textbf{Keywords:} Chow groups, quadratic forms, orthogonal flag varieties.
\end{abstract}

\maketitle

\tableofcontents

\section{Introduction}

%In this section, we present the two algebraic cycles that come into play in the main statement of this note %(see Proposition 1.1).

Let $X$ be a smooth projective quadric of dimension $n$ over a field $F$ associated with a non-degenerate $F$-quadratic form $q$ and let $K/F$ be a splitting field extension of $q$. Let us denote $[n/2]$ as $d$. 
In his paper \cite{uINV} dedicated to the Kaplansky's conjecture on the $u$-invariant of a field, Alexander Vishik 
introduced the so-called \textit{Elementary Discrete Invariant}.
This invariant encodes information about rationality of algebraic cycles on the $d+1$ orthogonal
grassmannians associated with $q$, by means of certain \textit{elementary classes} defined on these grassmannians over $K$: it is the collection
of the codimensions of those elementary classes that are already defined at the level of the base field
$F$ (such an algebraic cycle is called \textit{rational}).

In the current note, we relate the rationality of the \textit{highest} elementary classes (highest in the sense that, for each grassmannian, the highest elementary class is the one with maximal codimension)
with the rationality of certain algebraic cycles defined on some fiber products $X\times X\times \cdots \times X$.

More precisely, for any $I\subset \{0,\dots, d\}$, we write $\mathcal{F}(I)$ for the partial orthogonal
flag variety associated with $q$. So, for any $i\in \{0,\dots, d\}$, the variety $\mathcal{F}(i)$ is the grassmannian 
of $i$-dimensional totally isotropic subspaces and we denote it by $G_i$. In particular $G_0$ is the quadric $X$. 
For $J\subset I$, we write $\pi$ with subindex $I$ with $J$ underlined inside it
for the natural projection $\mathcal{F}(I)\rightarrow \mathcal{F}(J)$. In particular, for any $i\in \{0,\dots, d\}$, one can consider
\[\xymatrix{
X & \mathcal{F}(0,i) \ar[l]^{\pi_{(\underline{0},i)}} \ar[r]_{\pi_{(0,\underline{i})}} & G_i,
}\]
and we set 
\[Z^{i}_{n-i}:={\pi_{(0,\underline{i})}}_{\ast} \circ \pi_{(\underline{0},i)}^{\ast}(l_0)\in \text{CH}^{n-i}({G_i}_{K}),\]
where $l_0\in \text{CH}_0(X_{K})$
is the class of a closed point of $X_{K}$ of degree $1$ and $\text{CH}$ stands for the Chow ring
with $\mathbb{Z}$-coefficients.
The cycle $z^{i}_{n-i}:=Z^{i}_{n-i}\;(\text{mod}\;2) \in \text{Ch}^{n-i}({G_i}_{K})$ is the highest elementary class for the grassmannian $G_i$ (with $\text{Ch}$ the Chow ring
with $\mathbb{Z}/2\mathbb{Z}$-coefficients).
A.\,Vishik proved in \cite[Proposition 2.5]{uINV} that the rationality of $z^{i}_{n-i}$ implies the rationality of $z^{j}_{n-j}$ for any $j>i$.

We now introduce the other type of algebraic cycles coming into play in the main statement of the present paper (Theorem 1.1). For any $i\in \{0,\dots, d\}$, let us denote by 
$\text{sym}: \text{CH}^{\ast}(X^{i+1})\rightarrow \text{CH}^{\ast}(X^{i+1})$
the homomorphism $\Sigma_{s \in S_{i+1}}s_{\ast}$, where
$s:X^{i+1}\rightarrow X^{i+1}$ is the isomorphism associated with
a permutation $s$. We set 
\[\rho_i:=\text{sym}\left((\times_{j=0}^{i-1}h^j)\times l_0\right)\in \text{CH}^{n+i(i-1)/2}(X_{K}^{i+1}),\]
where $\times$ is the external product
and $h^j$ is the $j$-th power of the hyperplane section
class $h\in \text{CH}^1(X)$ (always rational).
Note that $\rho_1$ is
the \textit{ Rost correspondence} $1\times l_0+l_0\times 1$ of $X$ in the sense of \cite[\S 80]{EKM}
(we refer to \cite[\S 62]{EKM} for an introduction to correspondences). Note that the rationality of $\rho_i$ implies the rationality
of $\rho_j$ for any $j>i$. Note also that $\rho_0=l_0=Z^0_n$.

\medskip

The main result of this paper is the following statement

\begin{thm}[\textbf{Main Theorem}]
 \textit{Let $i \in \{0,\dots, d\}$. The cycle $z^{i}_{n-i}$ is rational if and only if the cycle} $\rho_i\;(\text{mod}\;2)$ \textit{is rational.}
\end{thm}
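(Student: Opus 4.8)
The plan is to reduce everything to an identity in the Chow groups of the split varieties $X_K$, $G_{i,K}$ and $X_K^{i+1}$, exploiting that rationality is preserved by pull-back, push-forward and multiplication by rational classes along morphisms defined over $F$, and that all of these operations commute with reduction $\mathrm{mod}\ 2$. The geometric device I would use is the incidence variety
\[W_i:=\mathcal{F}(0,i)\times_{G_i}\cdots\times_{G_i}\mathcal{F}(0,i)\quad((i+1)\text{ factors}),\]
parametrizing tuples $(\Pi,p_0,\dots,p_i)$ with $\Pi\in G_i$ and $p_0,\dots,p_i\in\Pi$. It carries the projection $\beta\colon W_i\to G_i$, which is an iterated $\mathbb{P}^i$-bundle (the fibre over $\Pi$ being $\Pi^{i+1}$), together with the $i+1$ maps $\alpha_j\colon W_i\to X$ remembering $p_j$ and their product $\alpha=(\alpha_0,\dots,\alpha_i)\colon W_i\to X^{i+1}$, whose image is the locus of $(i+1)$-tuples lying on a common isotropic $i$-plane and over which $\alpha$ is generically injective. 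All of these are defined over $F$. A dimension count gives $\dim X^{i+1}-\dim W_i=i(i+1)/2$, which is exactly the codimension carried by the auxiliary class introduced below.

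For the implication ``$\rho_i\ (\mathrm{mod}\ 2)$ rational $\Rightarrow z^{i}_{n-i}$ rational'' I would argue directly. Writing $\eta_j:=\alpha_j^{\ast}h$, introduce the rational class $\xi:=\prod_{j=0}^{i-1}\eta_j^{\,i-j}\in\mathrm{CH}(W_i)$, a product of powers of the (always rational) classes $\alpha_j^{\ast}h$. The key computation is that of $\beta_{\ast}\bigl(\alpha^{\ast}\rho_i\cdot\xi\bigr)$ via the projective-bundle push-forward formula: pushing the $j$-th factor carrying $\eta_j^{\,m}$ down its $\mathbb{P}^i$-bundle produces the Segre class $s_{m-i}$, which vanishes for $m<i$ and equals $1$ for $m=i$, while the factor carrying $\alpha_j^{\ast}l_0$ pushes forward to $Z^{i}_{n-i}$ by the very definition of the latter. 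Expanding $\rho_i=\mathrm{sym}\bigl(h^{0}\times\cdots\times h^{i-1}\times l_0\bigr)$ as the sum over the $(i+1)!$ orderings of its entries, one checks that after multiplication by $\xi$ only the identity ordering survives: the factor containing $l_0$ must occupy the last slot (otherwise a Segre class $s_{m-i}$ with $m<i$ kills the term), and then the surviving condition forces the exponents $h^{0},\dots,h^{i-1}$ into increasing order (one needs $a\ge j$ in every slot, hence the identity permutation). This yields the clean, integral identity $\beta_{\ast}\bigl(\alpha^{\ast}\rho_i\cdot\xi\bigr)=Z^{i}_{n-i}$; since $\xi$ is rational and the operations preserve rationality and commute with $\mathrm{mod}\ 2$, rationality of $\rho_i\ (\mathrm{mod}\ 2)$ forces rationality of $z^{i}_{n-i}$.

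For the converse I would read the identity above as $\Phi(\rho_i)=Z^{i}_{n-i}$, where $\Phi:=\beta_{\ast}\bigl(\alpha^{\ast}(-)\cdot\xi\bigr)$ is an $F$-rational correspondence from $\mathrm{Ch}(X^{i+1})$ to $\mathrm{Ch}(G_i)$. Since $\Phi$ sends rational classes to rational classes and the $h$-monomials already map onto the rational (Chern/Segre) classes of $G_{i,K}$ in the relevant codimension, the whole converse reduces to showing that $\Phi$ is \emph{injective modulo rational classes} there, i.e.\ that $\ker\Phi$ consists of rational cycles: granting this, any rational $a$ with $\Phi(a)=Z^{i}_{n-i}=\Phi(\rho_i)$ makes $\rho_i-a\in\ker\Phi$ rational, whence $\rho_i\ (\mathrm{mod}\ 2)$ is rational. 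I expect this injectivity to be the main obstacle. It cannot be obtained from the naive transpose $\alpha_{\ast}\beta^{\ast}$: the image of $\alpha_{\ast}$ is supported on the coplanarity locus and therefore misses the fibrewise dominant components of $\rho_i$ (already for $i=1$ the cycle $\alpha_{\ast}\beta^{\ast}Z^{1}_{n-1}$ carries no $1\times l_0$ or $l_0\times 1$ summand), so it is never congruent to $\rho_i$ modulo rational classes. The way through must be an explicit evaluation of $\Phi$ on an additive basis of $\mathrm{Ch}(X^{i+1}_K)$ built from the classes $h^{k}$ and $l_{k}$: the $h$-monomials are absorbed, by the same Segre mechanism, into the rational part of $\mathrm{Ch}(G_{i,K})$, whereas each basis vector involving the classes $l_{k}$ must be shown to map to a class that is nonzero and not already rational. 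Carrying out this computation — while tracking the always-rational correction terms from the higher Segre classes and the $\mathrm{mod}\ 2$ reduction throughout — is the delicate technical core I would expect to occupy most of the proof.
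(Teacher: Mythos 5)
Your argument for the implication ``$\rho_i\;(\text{mod}\;2)$ rational $\Rightarrow$ $z^{i}_{n-i}$ rational'' (the paper's Section 5) is correct and complete as sketched: the identity $\beta_{\ast}\bigl(\alpha^{\ast}\rho_i\cdot\xi\bigr)=Z^{i}_{n-i}$ does hold --- a summand with $l_0$ in a slot $j_0<i$ dies because the last slot then carries $\eta_i^{a}$ with $a<i$ and pushes to $s_{a-i}=0$, and among the remaining summands only the identity ordering survives since any other permutation of $\{0,\dots,i-1\}$ has some $a_{j}<j$ --- and $\alpha^{\ast}$, multiplication by $\xi$ and $\beta_{\ast}$ are defined over $F$ and commute with reduction modulo $2$. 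This is a genuinely different and shorter route than the paper's, which instead builds a correspondence $\theta_i'$ from $\Delta_1$ and $[\mathcal{F}(0,i)]$, extracts $\text{sym}(\times_{j=0}^{i-2}h^j)\times z^i_{n-i}$ from $\rho_i$, and strips off the remaining $h^j$ factors inductively.

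The converse implication is where the genuine gap lies, and the strategy you propose for it cannot be repaired. First, rationality of $z^{i}_{n-i}$ does not produce a rational $a\in\text{Ch}(X^{i+1}_K)$ with $\Phi(a)=z^{i}_{n-i}$: your operator $\Phi$ points from $X^{i+1}$ to $G_i$, so rationality of a class in the \emph{target} gives no rational preimage (and the $h$-monomials only hit products of Segre classes of $T_i$, which do not span the rational part of $\text{Ch}^{n-i}({G_i}_K)$ in general). Second, the reduction to ``$\ker\Phi$ consists of rational cycles'' is false: already for $i=1$ one has $\Phi(l_0\times 1)=0$, since the second slot carries $\eta_1^{0}$ and pushes down its $\mathbb{P}^{1}$-bundle to $s_{-1}=0$, while $l_0\times 1$ is not rational for anisotropic $X$. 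Third, your reason for discarding the transpose $\alpha_{\ast}\beta^{\ast}$ is mistaken: it is true that $(\theta_i)_{\ast}(Z^i_{n-i})$ has no $\text{sym}((\times_{j=1}^{i-1}h^j)\times 1\times l_0)$ component modulo $2$, but being congruent to $\rho_i$ modulo \emph{rational} classes does not require matching that component, because the cycle $\Delta_i$ --- which equals $\rho_i$ plus the terms $\text{sym}((\times_{j=1}^{i-1}h^j)\times h^k\times l_k)$ --- is itself rational modulo $2$ (Lemma 2.1). The paper's Section 3 proves exactly that $(\theta_i)_{\ast}(Z^i_{n-i})+\rho_i\equiv\Delta_i+(\text{nonessential})\pmod{2}$, via the evaluation of $(\alpha_i)_{\ast}(h^k)$ in Proposition 3.1; that backward induction, resting on Lemmas 2.4--2.6 about the Chern classes of $T_{i-1}$, is the computational core of the theorem, and nothing in your proposal substitutes for it.
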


The existence of an algebraic cycle on $X^{i+1}$ satifying such an equivalence is predicted by the motivic decomposition \cite[Corollary 91.8]{EKM}.
Theorem 1.1 reduces certain questions about rationality of algebraic cycles on orthogonal grassmannians to the sole level of quadrics.
For example, it allows one to reformulate Vishik Conjecture \cite[Conjecture 3.11]{GPQ}
and this reformulation may help prove the conjecture.
%Note that there are some quadrics for which none of the $z^{i}_{n-i}$ is rational (hence none of  
 %the $\rho_i$ is), this is for example the case for \textit{generic} quadrics 
%(see \cite[Statement 3.6]{GPQ}).

The proofs in the present note mainly rely on computations of compositions of correspondences and the use of Chern classes of vector bundles over orthogonal grassmannians.

In section 2, we introduce some materials which will be required to prove {Theorem 1.1} in sections 3 and 5.
In section 4, we apply Theorem 1.1 (actually, only the direct implication is needed) to relate the first Witt index
of the quadric with the rationality of the cycle $z^{i}_{n-i}$ (Proposition 4.1).

\smallskip
\noindent
{\sc Acknowledgements.} I thank Nikita Karpenko and Alexander Vishik for stimulating discussions.

\section{Materials}

In this section, we use notations introduced in the introduction.

\subsection{Rational cycles on powers of quadrics}

We refer to \cite[\S 68]{EKM} for an introduction to cycles on powers of quadrics.
For any $i \in \{1,\dots, d\}$, we set
\[\Delta_i:=\text{sym}\left((\times_{j=1}^{i-1}h^j)\times 1\times l_0\right)
+\sum_{k=i}^d\text{sym}\left((\times_{j=1}^{i-1}h^j)\times h^k \times l_k\right)
\in \text{CH}^{n+i(i-1)/2}(X_{K}^{i+1}).\]
where $l_k$ is the class in $\text{CH}_k(X_K)$ of a $k$-dimensional totally isotropic subspace
of $\mathbb{P}\left((V_q)_K\right)$ (with $V_q$ the $F$-vector space associated with $q$).
If $n=2d$, we choose an orientation $l_d$ of the quadric.

The following observation is crucial for our matter (it will be used in Corollary 3.15 to obtain the first part Theorem 1.1).

\begin{lemme}
\textit{For any $i \in \{1,\dots, d\}$, the cycle} $\Delta_i\;(\text{mod}\;2)\in \text{Ch}^{n+i(i-1)/2}(X_{K}^{i+1})$
\textit{is rational.}
\end{lemme}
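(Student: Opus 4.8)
The plan is to exhibit, for each $i$, a cycle on $X_F^{i+1}$ defined over the base field whose restriction to $K$ reduces mod $2$ to $\Delta_i$; such a cycle is rational by construction, and its mod-$2$ reduction is then the desired rational representative. I will use freely that the group of rational cycles (the image of the restriction map $\text{CH}(X_F^{i+1})\to \text{CH}(X_K^{i+1})$) is stable under external products, proper push-forward and l.c.i.\ pull-back along the projections and partial-diagonal embeddings of the powers of $X$, under intersection with the always-rational classes $h^a$, and under the symmetrization operator $\text{sym}$, which is itself defined over $F$. Since $\Delta_i=\text{sym}(W)$ for the explicit cycle $W$ of the statement, it suffices to produce a rational cycle whose restriction to $K$ symmetrizes mod $2$ to $\Delta_i$.

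The base case $i=1$ is clean: $\Delta_1$ is the class $\sum_{k}(h^k\otimes l_k+l_k\otimes h^k)$, which over $K$ is exactly the mod-$2$ reduction of the diagonal $[\Delta_X]\in\text{CH}(X_F\times X_F)$ (in the even-dimensional case one uses the decomposition of $h^d$ into the two middle-dimensional generators to match the middle terms mod $2$), and is therefore rational. This case already exposes the genuine difficulty for $i\ge 2$: over $K$ the isotropic classes $l_k$ enter $\Delta_i$ through multisets of the form $\{h^1,\dots,h^{i-1},h^k,l_k\}$ carrying an \emph{odd} coefficient, and such multisets cannot be produced from the diagonal and powers of $h$ alone. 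Indeed, the diagonal always contributes the two monomials $\cdots\otimes h^k\otimes l_k$ and $\cdots\otimes l_k\otimes h^k$ in a symmetric pair with identical content, which therefore cancel mod $2$ after $\text{sym}$; and products of several diagonals produce more than one isotropic class per monomial and so miss these multisets entirely. A new source of rational cycles carrying isotropic classes is thus required.

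This source is the orthogonal flag geometry, which is also why the argument is later needed precisely where Chern classes on the grassmannians enter. I will consider the $(i+1)$-fold fibre product $W:=\mathcal{F}(0,i)\times_{G_i}\cdots\times_{G_i}\mathcal{F}(0,i)$, with the projection $q\colon W\to G_i$ and the $i+1$ evaluation maps $p_0,\dots,p_i\colon W\to X$ recording the chosen points of the common totally isotropic subspace; all of this is defined over $F$. On $W$ I will intersect a suitable product of the rational classes $p_j^{\ast}(h^{a_j})$ with Chern classes of the pull-back $q^{\ast}U$ of the tautological isotropic subbundle $U$ on $G_i$, and push the result forward along $(p_0,\dots,p_i)\colon W\to X^{i+1}$. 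The outcome is rational by construction, and the push-forward, by integrating along the $\mathbb{P}^i$-fibres, is exactly the operation that converts hyperplane powers into the isotropic classes $l_k$ that were unreachable above. The scheme is then to adjust the exponents $a_j$ and the Chern classes so that, after $\text{sym}$, the $K$-fibre of this cycle equals $\Delta_i$ mod $2$.

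The main obstacle is this last matching, namely the computation of $(p_0,\dots,p_i)_{\ast}$ over the splitting field $K$. I expect to carry it out by base change to $G_{i,K}$, over which $W_K$ has fibre $(\mathbb{P}^i)^{i+1}$, applying the projection formula fibrewise and expanding the Chern classes of $U$ in the Schubert basis of $\text{CH}(G_{i,K})$ via the splitting principle; the operation ${\pi_{(0,\underline{i})}}_{\ast}\circ \pi_{(\underline{0},i)}^{\ast}$ already used to define $Z^{i}_{n-i}$ supplies the dictionary between these Schubert classes and the $h^k,l_k$ on the factors of $X^{i+1}$. The bookkeeping is delicate: one must check that the even, symmetric diagonal-type contributions drop out mod $2$ while the asymmetric ones assemble into the multisets $\{h^0,\dots,h^{i-1},l_0\}$ and $\{h^1,\dots,h^{i-1},h^k,l_k\}$ with coefficient exactly $1$, and in the even-dimensional case one must again account for the decomposition of $h^d$ once the index $k$ reaches $d$. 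Establishing that precisely these classes survive, with odd coefficient, is the crux of the lemma.
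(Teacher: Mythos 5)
There is a genuine gap: your proposal never completes the computation it announces. The entire content of the lemma, on your route, is the final matching of the push-forward $(p_0,\dots,p_i)_{\ast}$ of your Chern-class cycle with $\Delta_i$ modulo $2$, and you explicitly defer this ("the bookkeeping is delicate\dots{} is the crux of the lemma") without choosing the exponents $a_j$ or the Chern classes, let alone verifying that the asymmetric multisets $\{h^1,\dots,h^{i-1},h^k,l_k\}$ come out with odd coefficient. As it stands this is a plausible plan, not a proof.

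Moreover, the "impossibility" argument you give to justify abandoning the diagonal is incorrect, and the paper's actual proof is a counterexample to it. The point you miss is that one need not apply the full symmetrization $\mathrm{sym}_{S_{i+1}}$ to a product of a diagonal-type cycle with a power of $h$ (which would indeed kill everything modulo $2$ for $i\geq 2$); instead one inducts on $i$ and sums $\Delta_{i-1}\times h^{i-1}$ only over the $i+1$ cyclic shifts, i.e.\ over coset representatives of $S_{i+1}/S_i$. Since $\Delta_{i-1}$ is already $S_i$-symmetric, the coset decomposition $S_{i+1}=\bigsqcup_{j}\sigma^{j}S_i$ gives
\[\sum_{j=0}^{i}\sigma_{\ast}^{j}(\Delta_{i-1}\times h^{i-1})=
\mathrm{sym}\left((\times_{j=1}^{i-1}h^j)\times 1\times l_0\right)
+\sum_{k=i-1}^{d}\mathrm{sym}\left((\times_{j=1}^{i-1}h^j)\times h^k\times l_k\right),\]
with each distinct monomial appearing exactly once except in the $k=i-1$ term, where $h^{i-1}$ occurs twice in the multiset; that single extra term is divisible by $2$ and is integrally rational because $2l_{i-1}=h^{n-i+1}$. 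Each cyclic shift of a rational cycle is rational, so the left-hand side is rational modulo $2$ by the induction hypothesis, and subtracting the extra term yields $\Delta_i$. This elementary induction replaces the entire flag-variety apparatus you propose, and it is exactly the step your second paragraph claims cannot exist.
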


\begin{proof} We use an induction on $i$. 
In $\text{Ch}^{n}(X_{K}^{2})$, the cycle $\Delta_1\;(\text{mod}\;2)$
or $\Delta_1\;(\text{mod}\;2)+h^d\times h^d$, depending on whether $l_d^2=0$ or not, is the class of the diagonal. 
Hence, $\Delta_1\;(\text{mod}\;2)$ is rational.
%Since $\Delta_1$ is the class of the diagonal in $\text{CH}^{n}(X_{K}^{2})$, it is rational. 
Assume $\Delta_{i-1}\;(\text{mod}\;2)$ rational and let $\sigma \in S_{i+1}$ be a cyclic permutation ($i\geq 2$).
Then the cycle
\begin{equation}\sum_{j=0}^{i}\sigma_{\ast}^{j}(\Delta_{i-1}\times h^{i-1})=
\text{sym}\left((\times_{j=1}^{i-1}h^j)\times 1\times l_0\right)
+\sum_{k=i-1}^d\text{sym}\left((\times_{j=1}^{i-1}h^j)\times h^k \times l_k\right)\end{equation}
is congruent modulo $2$ to a rational cycle.
Moreover, the integral cycle 
\begin{equation} \text{sym}\left((\times_{j=1}^{i-1}h^j)\times h^{i-1} \times l_{i-1}\right)\end{equation}
is always rational (and divisible by $2$). Indeed, since $h^{i-1}$ appears exactly two times in every summand of (2.3), each distinct summand appears exactly two times, therefore it can be rewritten as
\[2\sum_{s\in A_{i+1}}s_{\ast}((\times_{j=1}^{i-1}h^j)\times h^{i-1} \times l_{i-1}),\]
and one has $2l_{i-1}= h^{n-i+1}$. Since $\Delta_i$ is the difference of (2.2) and (2.3), one get the 
conclusion.
\end{proof}

\subsection{Cycles on orthogonal flag varieties}

For $0\leq i\leq d$ and $n-i-d\leq j \leq n-i$, we set
\[Z^i_j:={\pi_{(0,\underline{i})}}_{\ast} \circ \pi_{(\underline{0},i)}^{\ast}(l_{n-i-j})\in \text{CH}^{j}({G_i}_{K})\]
and $z^i_j:=Z^i_j\;(\text{mod}\;2) \in \text{Ch}^{j}({G_i}_{K})$ 
(denoted respectively as $Z^{\text{\boxed{i-d}}}_j$ and $z^{\text{\boxed{i-d}}}_j$ in \cite{uINV}). 
The cycles $z^i_j$ are all the elementary classes defining
%corresponding to the main antidiagonal when representing 
the Elementary Discrete Invariant $EDI(X)$. 

%as a $(d+1)\times (d+1)$ coordinate square (see \cite[\S 2]{uINV} and Remark 5.3), while the highest classes %$\mathbf{z}_i\;\text{mod}\;2$ correspond to the first column on the left.

For any nonnegative integer $j$ and $i>0$  such that $j+i\leq d$, we set
\[W_j^i:={\pi_{(0,\underline{i})}}_{\ast} \circ\pi_{(\underline{0},i)}^{\ast}(h^{j+i})\in \text{CH}^j(G_i),\]
$W^0_j:=h^j$ and $w^i_j:=W^i_j\;(\text{mod}\;2)\in \text{Ch}^{j}(G_i)$
(denoted respectively as $W_j^{\text{\boxed{i-d}}}$ and $w_j^{\text{\boxed{i-d}}}$ in \cite{uINV}). 

\medskip

The proofs in the next sections will use the following lemma, which can easily be deduced from \cite[Propositon 2.1 and Lemma 2.6]{uINV} and its proofs.
For $0\leq i\leq d$, let us denote by $T_i$ the tautological vector bundle on $G_i$,
i.e. $T_i$ is given by the closed subvariety of the trivial bundle
$V\mathbbm{1}=V_q\times G_i$ consisting of pairs $(u,U)$ such that $u\in U$.
For a vector bundle $E$ over a scheme, we write $c_i(E)$ for the $i$-th Chern class with value in $\text{CH}$.

%\medskip

\begin{lemme}[Vishik]
\textit{One has} 
\begin{enumerate}[(i)]
\item ${\pi_{(0,\underline{i})}}_{\ast}\circ \pi_{(\underline{0},i)}^{\ast}(h^i)=W^i_0=[G_i];$
%\item ${\pi_{(0,\underline{i},\underline{i+1})}}_{\ast}\circ \pi_{(\underline{0},i,i+1)}^{\ast}(h^i)=[\mathcal{F}%(i,i+1)];$
\item  \[\pi_{(\underline{i-1},i)}^{\ast}(Z_{j}^{i-1})=c_1(\mathcal{O}(1))\cdot \pi_{(i-1,\underline{i})}^{\ast}(Z_{j-1}^{i})+\pi_{(i-1,\underline{i})}^{\ast}(Z_{j}^{i}),\] \textit{where $\mathcal{O}(1)$ is the standard sheaf on the projective bundle
$\mathcal{F}(i-1,i)=\mathbb{P}_{G_{i-1}}(T^{\vee}_{i})$, with $T^{\vee}_{i}$ the vector bundle dual to  $T_{i}$;}
\item \textit{For $0\leq j < d-i+1$,} \[\pi_{(\underline{i-1},i)}^{\ast}(W_{j}^{i-1})=c_1(\mathcal{O}(1))\cdot \pi_{(i-1,\underline{i})}^{\ast}(W_{j-1}^{i})+\pi_{(i-1,\underline{i})}^{\ast}(W_{j}^{i});\]
\item  \[\pi_{(\underline{i-1},i)}^{\ast}(W_{d-i+1}^{i-1})=c_1(\mathcal{O}(1))\cdot \pi_{(i-1,\underline{i})}^{\ast}(W_{d-i}^{i})+2\pi_{(i-1,\underline{i})}^{\ast}(Z_{d-i+1}^{i}).\]
\end{enumerate}
\end{lemme}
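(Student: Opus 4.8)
The plan is to route all four identities through the two natural fibrations attached to the flag variety, together with the $\mathbb{P}^{i}$-bundle $\pi_{(0,\underline{i})}\colon \mathcal{F}(0,i)\to G_i$. Write $p:=\pi_{(\underline{i-1},i)}$ and $r:=\pi_{(i-1,\underline{i})}$. The map $\pi_{(0,\underline{i})}$ identifies $\mathcal{F}(0,i)$ with the projective bundle $\mathbb{P}(T_i)$ of lines in the tautological bundle (the fibre over $[U_i]$ being $\mathbb{P}(U_i)\cong\mathbb{P}^i$), and under this identification $\pi_{(\underline{0},i)}^{\ast}(h)=c_1(\mathcal{O}(1))$ is the relative hyperplane class, because the universal line $\mathcal{O}(-1)\hookrightarrow \pi_{(0,\underline{i})}^{\ast}T_i\hookrightarrow V\mathbbm{1}$ pulls back $\mathcal{O}_X(1)$ from $X\subset\mathbb{P}(V_q)$. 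For (i) I would then invoke the projection formula together with the Segre-class computation ${\pi_{(0,\underline{i})}}_{\ast}\big(c_1(\mathcal{O}(1))^{i}\big)=s_0(T_i)=1$, which gives $W^i_0={\pi_{(0,\underline{i})}}_{\ast}\pi_{(\underline{0},i)}^{\ast}(h^i)=[G_i]$ directly.

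For (ii)–(iv) the engine is the tautological short exact sequence on $\mathcal{F}(i-1,i)$ coming from the flag $U_{i-1}\subset U_i$ with quotient line $U_i/U_{i-1}$:
\[0\to p^{\ast}T_{i-1}\to r^{\ast}T_i\to L\to 0,\qquad c_1(L)=c_1(\mathcal{O}(1)).\]
Passing to the quotients $V\mathbbm{1}/T_{\bullet}$ produces a surjection $p^{\ast}(V\mathbbm{1}/T_{i-1})\to r^{\ast}(V\mathbbm{1}/T_i)$ with kernel $L$, hence the $K$-theoretic identity $[p^{\ast}(V\mathbbm{1}/T_{i-1})]=[r^{\ast}(V\mathbbm{1}/T_i)]+[L]$, so that the total Chern classes satisfy $c\big(p^{\ast}(V\mathbbm{1}/T_{i-1})\big)=c\big(r^{\ast}(V\mathbbm{1}/T_i)\big)\cdot\big(1+c_1(\mathcal{O}(1))\big)$. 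Extracting the degree-$j$ part yields precisely the two-term shape $p^{\ast}(\,\cdot\,)=c_1(\mathcal{O}(1))\cdot r^{\ast}(\,\cdot\,)_{j-1}+r^{\ast}(\,\cdot\,)_{j}$ appearing on the right-hand sides. The remaining point is to match these Chern classes with the classes $Z^i_j$ and $W^i_j$ themselves; this identification (of the $Z^i_{\bullet}$ and $W^i_{\bullet}$ with the Chern classes of the relevant tautological bundle in the appropriate ranges) is exactly what \cite[Proposition 2.1]{uINV} supplies. Feeding it into the Whitney relation above gives (ii) and, in the range $0\leq j<d-i+1$ where no degeneration occurs, (iii).

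Identity (iv) is the boundary case $j=d-i+1$, where $W^i_{d-i+1}$ is no longer even defined (the range for $W$ requires $j+i\leq d$). Here the constant term of the decomposition is not a $W$-class but $2\,Z^i_{d-i+1}$, and the doubling is forced by the orientation relation on the quadric, $2\,l_{d-i+1}=h^{\,n-(d-i+1)}$, the very relation already exploited in the proof of Lemma 2.1. Controlling how the top Chern class of the tautological bundle degenerates across this boundary — equivalently, pinning down the integral (rather than merely mod $2$) coefficient there — is the delicate step, and it is provided by \cite[Lemma 2.6]{uINV}. I expect this boundary bookkeeping, rather than the Whitney mechanism itself, to be the main obstacle: once the Chern-class identifications and the factor $2$ at the top are secured, the rest is the routine projective-bundle and projection-formula manipulation set up above.
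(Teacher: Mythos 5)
The paper offers no proof of this lemma at all --- it is attributed to Vishik and stated as an easy consequence of \cite[Proposition 2.1 and Lemma 2.6]{uINV} --- and your reconstruction routes through exactly those ingredients (the quotient-line exact sequence $0\to \pi_{(\underline{i-1},i)}^{\ast}T_{i-1}\to \pi_{(i-1,\underline{i})}^{\ast}T_{i}\to \mathcal{O}(1)\to 0$, the Whitney formula for the bundles $V\mathbbm{1}/T_{\bullet}$, and the identification of $c_l(V\mathbbm{1}/T_i)$ with $W^i_l$ for $l\le d-i$ and with $2Z^i_l$ above that range), so it is essentially the intended argument. The one step you leave tacit is that extracting (ii) from the Whitney relation requires cancelling the common factor $2$ (legitimate since $\mathcal{F}(i-1,i)_K$ is cellular, hence has torsion-free Chow groups) and a separate check at the bottom of the range of $j$ when $n=2d$, where the relation degenerates into (iv); both points are covered by the cited \cite[Lemma 2.6]{uINV}, which is the same level of reliance on Vishik's paper that the authors themselves adopt.
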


The following statement is a direct consequence of the previous lemma. 

\begin{lemme}
 \textit{For any $1\leq i\leq d$, $i\leq k \leq d$ and $0\leq m\leq k$, one has}
\[ {\pi_{(\underline{i-1},i)}}_{\ast} \circ \pi_{(i-1,\underline{i})}^{\ast}(W^i_{k-i}\cdot Z^i_{n-i-m})
= \sum_{j=\text{max}(i-m,0)}^{\text{min}(k-m,i)} W^{i-1}_{k-m-j}\cdot {\pi_{(\underline{i-1},i)}}_{\ast} \circ \pi_{(i-1,\underline{i})}^{\ast}
(Z^{i}_{n-2i+j}).\]
\end{lemme}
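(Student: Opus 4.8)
The plan is to study the operator $D := {\pi_{(\underline{i-1},i)}}_{\ast}\circ\pi_{(i-1,\underline{i})}^{\ast}$ from $\mathrm{CH}({G_i})$ to $\mathrm{CH}({G_{i-1}})$ by first peeling off the factor $W^i_{k-i}$ and then reducing everything to the single pushforwards $D(Z^i_{\bullet})$ that appear on the right-hand side. Throughout I write $p := \pi_{(\underline{i-1},i)}$, $q := \pi_{(i-1,\underline{i})}$ and $\xi := c_1(\mathcal{O}(1))$ on $\mathcal{F}(i-1,i)$. The fibres of $p$ over $G_{i-1}$ are the quadrics of isotropic lines in $U_{i-1}^{\perp}/U_{i-1}$, so $p$ is proper of relative dimension $n-2i+2$; in particular $p_{\ast}$ lowers codimension by $n-2i+2$, which already matches the two sides of the asserted identity degree by degree.

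First I would invert the recursion in part (iii) of the previous lemma. Since $k\le d$, the index $k-i$ stays strictly below the boundary value $d-i+1$, so part (iii) applies at each step (and the exceptional relation (iv) is never reached), giving
\[ q^{\ast}(W^i_{k-i}) = \sum_{a=0}^{k-i} (-\xi)^a\, p^{\ast}(W^{i-1}_{k-i-a}). \]
Inserting this into $D(W^i_{k-i}\cdot Z^i_{n-i-m}) = p_{\ast}\bigl(q^{\ast}(W^i_{k-i})\cdot q^{\ast}(Z^i_{n-i-m})\bigr)$ and using the projection formula for $p$ yields
\[ D(W^i_{k-i}\cdot Z^i_{n-i-m}) = \sum_{a=0}^{k-i} (-1)^a\, W^{i-1}_{k-i-a}\cdot p_{\ast}\bigl(\xi^a\, q^{\ast}(Z^i_{n-i-m})\bigr). \]

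The next step is to evaluate $P(a,l) := p_{\ast}\bigl(\xi^a\, q^{\ast}(Z^i_l)\bigr)$. Rewriting part (ii) as $\xi\cdot q^{\ast}(Z^i_l) = p^{\ast}(Z^{i-1}_{l+1}) - q^{\ast}(Z^i_{l+1})$ and pushing forward produces the recursion $P(a,l) = Z^{i-1}_{l+1}\cdot p_{\ast}(\xi^{a-1}) - P(a-1,l+1)$. Here the key point is that $p_{\ast}(\xi^{a-1})$ vanishes for purely dimensional reasons: it lies in $\mathrm{CH}^{(a-1)-(n-2i+2)}({G_{i-1}})$, and $a-1\le k-i-1\le d-i-1 < n-2i+2$ (because $d+i\le 2d\le n$), so the codimension is negative. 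Hence the recursion collapses to $P(a,l) = -P(a-1,l+1)$, and iterating down to $P(0,l+a)=D(Z^i_{l+a})$ gives $P(a,l) = (-1)^a\,D(Z^i_{l+a})$. Substituting back, the signs cancel and I obtain, after the reindexing $j := i-m+a$,
\[ D(W^i_{k-i}\cdot Z^i_{n-i-m}) = \sum_{j=i-m}^{k-m} W^{i-1}_{k-m-j}\cdot D(Z^i_{n-2i+j}). \]

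It remains to match this range with the asserted one, $\max(i-m,0)\le j\le \min(k-m,i)$. The terms with $j>i$ involve $Z^i_{n-2i+j}$ of codimension $>n-i$, outside the range in which $Z^i_{\bullet}$ is defined, hence zero; and the terms with $j<\max(i-m,0)$, i.e.\ $j\le 1$, give $D(Z^i_{n-2i+j})\in\mathrm{CH}^{\,j-2}({G_{i-1}})=0$ again for codimension reasons. Thus my sum and the claimed sum have exactly the same nonzero terms. I expect the main obstacle to be precisely this bookkeeping at the two ends of the summation, together with verifying that the inversion of (iii) never hits the boundary index $d-i+1$ (where (iv) would inject an extra $2\,q^{\ast}(Z^i_{\bullet})$ term); both are controlled by the inequalities $k\le d$ and $i\le d=[n/2]$, the same inequalities that force the intermediate classes $p_{\ast}(\xi^{a-1})$ to vanish.
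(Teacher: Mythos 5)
Your argument is correct and is essentially the paper's proof with the induction on $k$ unrolled into closed form: the paper peels off one factor of $c_1(\mathcal{O}(1))$ at a time via Lemma 2.4(iii) and (ii), the projection formula and the same dimensional vanishings, reducing $(k,m)$ to $(k-1,m-1)$, whereas you invert both recursions explicitly and then discard the out-of-range terms at the two ends of the sum. One numerical slip worth correcting: the relative dimension of $\pi_{(\underline{i-1},i)}$ is $n-2i$, not $n-2i+2$ (its fibre is the quadric of isotropic lines in $U^{\perp}/U$, of dimension $(n+2-2i)-2$), so $p_{\ast}(\xi^{a-1})$ lies in $\mathrm{CH}^{a-1-(n-2i)}$ and your $D(Z^i_{n-2i+j})$ in $\mathrm{CH}^{j}$; the vanishings you need survive the correction, since $a-1\le d-i-1<n-2i$ (because $d+i\le n$) and $\mathrm{CH}^{j}(G_{i-1})=0$ for $j\le -1$ (your ``$j\le 1$'' should read $j\le -1$).
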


\begin{proof}
We proceed by induction on $k$. The base of the induction $k=i$ is obvious. Let $i<k\leq d$
and $0\leq m\leq k$.
By Lemma 2.4(iii), one has 
\[\pi_{(i-1,\underline{i})}^{\ast}(W^i_{k-i}\cdot Z^i_{n-i-m})
=\left(\pi_{(\underline{i-1},i)}^{\ast}(W_{k-i}^{i-1})-c_1(\mathcal{O}(1))\cdot \pi_{(i-1,\underline{i})}^{\ast}(W_{k-i-1}^{i})\right) \cdot\pi_{(i-1,\underline{i})}^{\ast}(Z^i_{n-i-m}).\]
Hence, by the Projection Formula, the cycle ${\pi_{(\underline{i-1},i)}}_{\ast} \circ \pi_{(i-1,\underline{i})}^{\ast}(W^i_{k-i}\cdot Z^i_{n-i-m})$ is equal to
\[W^{i-1}_{k-i}\cdot {\pi_{(\underline{i-1},i)}}_{\ast} \circ \pi_{(i-1,\underline{i})}^{\ast}
(Z^{i}_{n-i-m})-
{\pi_{(\underline{i-1},i)}}_{\ast}\left(\pi_{(i-1,\underline{i})}^{\ast}(W_{k-i-1}^{i})
\cdot c_1(\mathcal{O}(1))\cdot \pi_{(i-1,\underline{i})}^{\ast}
(Z^{i}_{n-i-m}) 
\right).\]
For $m=0$, one has $c_1(\mathcal{O}(1))\cdot \pi_{(i-1,\underline{i})}^{\ast}
(Z^{i}_{n-i-m}) =\pi_{(\underline{i-1},i)}^{\ast}(Z^{i-1}_{n-i-m+1})$ by Lemma 2.4(ii).
Therefore, it follows from the Projection Formula that 
${\pi_{(\underline{i-1},i)}}_{\ast} \circ \pi_{(i-1,\underline{i})}^{\ast}(W^i_{k-i}\cdot Z^i_{n-i})$ can be rewritten as 
\[W^{i-1}_{k-i}\cdot {\pi_{(\underline{i-1},i)}}_{\ast} \circ \pi_{(i-1,\underline{i})}^{\ast}
(Z^{i}_{n-i})-Z^{i-1}_{n-i+1}\cdot 
{\pi_{(\underline{i-1},i)}}_{\ast}\circ \pi_{(i-1,\underline{i})}^{\ast}(W_{k-i-1}^{i})
.\]
Moreover, the cycle ${\pi_{(\underline{i-1},i)}}_{\ast}\circ\pi_{(i-1,\underline{i})}^{\ast}(W_{k-i-1}^{i})$ is trivial by dimensional reasons.
Thus, for $m=0$, one obtains the desired formula.
Assume that $m>0$. Then, by Lemma 2.4(ii),
one has $c_1(\mathcal{O}(1))\cdot \pi_{(i-1,\underline{i})}^{\ast}
(Z^{i}_{n-i-m}) =\pi_{(\underline{i-1},i)}^{\ast}(Z^{i-1}_{n-i-m+1})-\pi_{(i-1,\underline{i})}^{\ast}(Z^{i}_{n-i-m+1})$. 
Therefore, it follows from the Projection Formula and the same dimensional considerations as in the case 
$m=0$, that the cycle ${\pi_{(\underline{i-1},i)}}_{\ast} \circ \pi_{(i-1,\underline{i})}^{\ast}(W^i_{k-i}\cdot Z^i_{n-i-m})$ can be rewritten as
\[W^{i-1}_{k-i}\cdot {\pi_{(\underline{i-1},i)}}_{\ast} \circ \pi_{(i-1,\underline{i})}^{\ast}
(Z^{i}_{n-i-m})+
{\pi_{(\underline{i-1},i)}}_{\ast}\circ \pi_{(i-1,\underline{i})}^{\ast}(W_{(k-1)-i}^{i}\cdot Z^{i}_{n-i-(m-1)})
.\]
By the induction hypothesis, the second summand of the latter sum is equal to
\[\sum_{j=\text{max}(i-m+1,0)}^{\text{min}(k-m,i)} W^{i-1}_{k-m-j}\cdot {\pi_{(\underline{i-1},i)}}_{\ast} \circ \pi_{(i-1,\underline{i})}^{\ast}
(Z^{i}_{n-2i+j}).\]
Since ${\pi_{(\underline{i-1},i)}}_{\ast} \circ \pi_{(i-1,\underline{i})}^{\ast}
(Z^{i}_{n-i-m})=0$ if $m>i$ (by dimensional reasons),
one obtains the desired formula for ${\pi_{(\underline{i-1},i)}}_{\ast} \circ \pi_{(i-1,\underline{i})}^{\ast}(W^i_{k-i}\cdot Z^i_{n-i-m})$.
\end{proof}

Finally, the following description of the Chern classes modulo $2$ of the tautological vector bundles on orthogonal grassmannians will be needed.

\begin{lemme}
\textit{For any $1\leq i \leq d$ and $0\leq j \leq i$, one has }
\[c_j(T_{i-1})\;(\text{mod}\;2)={\pi_{(\underline{i-1},i)}}_{\ast} \circ \pi_{(i-1,\underline{i})}^{\ast}
(z^{i}_{n-2i+j})
\]
\textit{in} $\text{Ch}^j({G_{i-1}}_K)$.
\end{lemme}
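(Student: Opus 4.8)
For brevity write $p:=\pi_{(\underline{i-1},i)}$ and $q:=\pi_{(i-1,\underline{i})}$ for the two projections of $\mathcal F(i-1,i)$ onto $G_{i-1}$ and $G_i$. My plan is to evaluate the right-hand side $p_*q^*(z^i_{n-2i+j})$ as an honest pushforward and to identify the result with $c_j(T_{i-1})$. The geometric input is twofold. First, $q$ is the projective bundle of Lemma 2.4, carrying the relative sheaf $\mathcal O(1)$, whereas $p$ is a \emph{quadric} bundle: its fibre over an $i$-dimensional totally isotropic $U$ is the quadric of isotropic lines of $U^\perp/U$, of dimension $n-2i$. Second, on $\mathcal F(i-1,i)$ the tautological flag gives the short exact sequence
\[0\longrightarrow p^*T_{i-1}\longrightarrow q^*T_i\longrightarrow \mathcal O(1)\longrightarrow 0,\]
so that, modulo $2$, $c_1(\mathcal O(1))=q^*c_1(T_i)+p^*c_1(T_{i-1})$ and, by the Whitney formula, $p^*c_j(T_{i-1})=\sum_{k=0}^{j}c_1(\mathcal O(1))^{\,j-k}\,q^*c_k(T_i)$.

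First I would reduce the pushforward using Lemma 2.4(ii), which modulo $2$ reads $q^*z^i_m=p^*z^{i-1}_m+c_1(\mathcal O(1))\cdot q^*z^i_{m-1}$. The decisive vanishing is $p_*(1)=0$ in $\text{Ch}^{\ast}(G_{i-1})$: for $n-2i>0$ it holds for dimension reasons, while in the remaining case $n=2d$, $i=d$ the fibre consists of the two isotropic lines of a hyperbolic plane, so its degree is the even number $2$ and again vanishes modulo $2$. Combined with the projection formula this turns $p_*q^*(z^i_{n-2i+j})$ into $p_*\bigl(c_1(\mathcal O(1))\cdot q^*z^i_{n-2i+j-1}\bigr)$, and substituting the expression for $c_1(\mathcal O(1))$ yields the recursion
\[p_*q^*(z^i_{n-2i+j})=c_1(T_{i-1})\cdot p_*q^*(z^i_{n-2i+j-1})+p_*q^*\bigl(w^i_1\cdot z^i_{n-2i+j-1}\bigr),\]
where I use $c_1(T_i)\equiv w^i_1\pmod 2$ (which follows from the case $i=0$, where $c_1(T_0)=c_1(\mathcal O(-1))\equiv h=w^0_1$, together with Lemma 2.4(iii) for $j=1$ and the sequence above). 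The last term is of exactly the shape handled by Lemma 2.5, whose right-hand side is a combination of the very classes $p_*q^*(z^i_{n-2i+j'})$; this is what lets the products $w^i_{\ast}\cdot z^i_{\ast}$ be absorbed back into lower instances of the sought quantity.

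The heart of the proof is the actual evaluation, a fibrewise computation along the quadric bundle $p$. Writing $\mathcal E:=T_{i-1}^\perp/T_{i-1}$, the variety $\mathcal F(i-1,i)$ is the quadric bundle of $\mathcal E$ and $\text{Ch}^{\ast}(\mathcal F(i-1,i))$ is free over $\text{Ch}^{\ast}(G_{i-1})$ with the relative point class among its generators; I would expand $q^*z^i_{n-2i+j}$ in this relative basis and extract, via $p_*$, the coefficient of the relative point class. That coefficient is a characteristic class of $\mathcal E$, which the exact sequences relating $\mathcal E$ to $T_{i-1}$ identify modulo $2$ with $c_j(T_{i-1})$. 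The base case $j=0$ is the assertion that $q^*z^i_{n-2i}$ restricts to a rational point on each fibre of $p$ — a degree count on the sub-quadric $U^\perp/U$ which follows from Lemma 2.4(i) transported through the flag — and gives $p_*q^*(z^i_{n-2i})=[G_{i-1}]=c_0(T_{i-1})$; the recursion above then propagates the identity to all $j$. The top case $j=i$ recovers the expected equality between the top Chern class $c_i(T_{i-1})$ of the rank-$i$ bundle $T_{i-1}$ and the highest elementary class $z^i_{n-i}$.

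The main obstacle is precisely this pushforward: unlike $q$, the map $p$ is a quadric bundle and admits no ready Gysin formula, so $p_*$ must be controlled entirely through Lemma 2.4, Lemma 2.5, the vanishing $p_*(1)=0$, and the fibrewise degree input. All the genuinely delicate points are $2$-adic — the degree-$2$ fibre that rescues $p_*(1)\equiv 0$ when $n=2d$, the divisibility phenomenon already visible in the proof of Lemma 2.2, and the dependence on the chosen orientation $l_d$ in the even-dimensional case, which one must check does not disturb the classes modulo $2$.
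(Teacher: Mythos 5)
Your setup is sound as far as it goes: the exact sequence $0\to \pi_{(\underline{i-1},i)}^{\ast}T_{i-1}\to \pi_{(i-1,\underline{i})}^{\ast}T_i\to \mathcal{O}(1)\to 0$, the vanishing of ${\pi_{(\underline{i-1},i)}}_{\ast}(1)$ modulo $2$, and the consequent reduction of ${\pi_{(\underline{i-1},i)}}_{\ast}\circ\pi_{(i-1,\underline{i})}^{\ast}(z^i_{n-2i+j})$ to ${\pi_{(\underline{i-1},i)}}_{\ast}\left(c_1(\mathcal{O}(1))\cdot \pi_{(i-1,\underline{i})}^{\ast}(z^i_{n-2i+j-1})\right)$ are all correct and consistent with Lemma 2.4. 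But the recursion you then extract is vacuous. Write $p={\pi_{(\underline{i-1},i)}}$, $q=\pi_{(i-1,\underline{i})}$. Applying Lemma 2.5 to your last term requires $k=i+1$ and $m=i-j+1$ (already illegitimate when $i=d$, since the lemma needs $k\leq d$), and it returns
\[p_{\ast}q^{\ast}\bigl(w^i_1\cdot z^i_{n-2i+j-1}\bigr)=w^{i-1}_1\cdot p_{\ast}q^{\ast}\bigl(z^i_{n-2i+j-1}\bigr)+p_{\ast}q^{\ast}\bigl(z^i_{n-2i+j}\bigr).\]
Substituting this into your displayed recursion and using $c_1(T_{i-1})\equiv w^{i-1}_1\pmod 2$ (which holds by the same exact sequence), everything cancels and you are left with $0=0$: the products $w^i_{\ast}\cdot z^i_{\ast}$ are not ``absorbed into lower instances'' but reproduce exactly the instance at $j$ you are trying to compute. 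The only remaining substance in your argument is the ``fibrewise evaluation'' along the quadric bundle $p$, where you assert that the coefficient of the relative point class in $q^{\ast}z^i_{n-2i+j}$ is a characteristic class of $T_{i-1}^{\perp}/T_{i-1}$ identifiable with $c_j(T_{i-1})$ modulo $2$ --- but that assertion \emph{is} the lemma; no computation is given that would establish it.

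The missing input is global, not relative: the paper uses the Whitney relation for the trivial bundle $V\mathbbm{1}$ on $G_{i-1}$, namely $\sum_k c_{j-k}(V\mathbbm{1}/T_{i-1})\cdot c_k(T_{i-1})=0$ for $j>0$, together with Vishik's identification $W^{i-1}_l=c_l(V\mathbbm{1}/T_{i-1})$ and the divisibility by $2$ of $c_l(V\mathbbm{1}/T_{i-1})$ for $l>d-i+1$ (\cite[Proposition 2.1]{uINV}). This expresses $c_j(T_{i-1})$ modulo $2$ through the classes $W^{i-1}_{j-k}\cdot p_{\ast}q^{\ast}(Z^i_{n-2i+k})$ with $k<j$ (induction on $j$), and the sum so obtained is then matched against Lemma 2.5 applied at the top value $k=d$, i.e.\ to $W^i_{d-i}\cdot Z^i_{n-i-d+j}$, where the leftover term can be killed by Lemma 2.4(iv) and dimension counting. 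That is the step that breaks the circularity, and it is absent from your proposal; the relative exact sequence on $\mathcal{F}(i-1,i)$ alone cannot produce it, since it only encodes information that Lemma 2.4 already contains.
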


\begin{proof}
We use an induction on $j$.
Since $Z^i_{n-2i}={\pi_{(0,\underline{i})}}_{\ast}\circ \pi_{(\underline{0},i)}^{\ast}(l_i)$, the cycle 
$\pi_{(i-1,\underline{i})}^{\ast}(Z^i_{n-2i})$
is the class in $\text{CH}\left( \mathcal{F}(i-1,i)_K  \right)$ of the closure of 
$\{(y,<y,x>)| x\in (y^{\perp}\backslash y) \cap L_i  \}$ in ${\mathcal{F}(i-1,i)_K}$,
with $L_i$ a fixed $i$-dimensional totally isotropic subspace of $\mathbb{P}\left( (V_q)_K\right)$.
Consequently, one has 
${\pi_{(\underline{i-1},i)}}_{\ast} \circ \pi_{(i-1,\underline{i})}^{\ast}(Z^{i}_{n-2i})=1$. 
Let $1\leq j \leq i$.
By \cite[Proposition 2.1]{uINV}, for any $l>d-i+1$, the element $c_l(V\mathbbm{1}/T_{i-1})$ is divisible by
$2$ in $\text{CH}^l({G_{i-1}}_K)$. Therefore, by Whitney Sum Formula (\cite[Proposition 54.7]{EKM}), 
the Chern class $c_j(T_{i-1})$ is congruent modulo $2$ to
\[c_j(V\mathbbm{1}/T_{i-1})+ \sum_{k=\text{max}(1,j-d+i-1)}^{j-1}  c_{j-k}(V\mathbbm{1}/T_{i-1})\cdot c_k(T_{i-1}).\] 
By the induction hypothesis and \cite[Proposition 2.1]{uINV}, the latter cycle  is congruent modulo $2$ to
\begin{equation} 
c_j(V\mathbbm{1}/T_{i-1})+ \sum_{k=\text{max}(1,j-d+i-1)}^{j-1} W^{i-1}_{j-k} \cdot
{\pi_{(\underline{i-1},i)}}_{\ast} \circ \pi_{(i-1,\underline{i})}^{\ast}
(Z^{i}_{n-2i+k}).
\end{equation}
Moreover, by Lemma 2.5, one has 
\begin{equation}
 {\pi_{(\underline{i-1},i)}}_{\ast} \circ \pi_{(i-1,\underline{i})}^{\ast}(W^i_{d-i}\cdot Z^i_{n-i-d+j})
= \sum_{k=\text{max}(j-d+i,0)}^{j} W^{i-1}_{j-k}\cdot {\pi_{(\underline{i-1},i)}}_{\ast} \circ \pi_{(i-1,\underline{i})}^{\ast}
(Z^{i}_{n-2i+k}).
\end{equation}

If $j-d+i-1<0$, combining (2.7), (2.8), \cite[Proposition 2.1]{uINV} and the base of the induction,
one get that $c_j(T_{i-1})$ is congruent modulo $2$ to
\[{\pi_{(\underline{i-1},i)}}_{\ast} \circ \pi_{(i-1,\underline{i})}^{\ast}
(Z^{i}_{n-2i+j})+
 {\pi_{(\underline{i-1},i)}}_{\ast} \circ \pi_{(i-1,\underline{i})}^{\ast}(W^i_{d-i}\cdot Z^i_{n-i-d+j}).\]
Furthermore, by applying Lemma 2.4(iv) after Lemma 2.4(ii) and using the Projection Formula combined with dimensional considerations (recall that $j>0$, so $i<d$), one
obtains 
\begin{equation} {\pi_{(\underline{i-1},i)}}_{\ast} \circ \pi_{(i-1,\underline{i})}^{\ast}(w^i_{d-i}\cdot z^i_{n-i-d+j})=
w^{i-1}_{d-i+1}\cdot
{\pi_{(\underline{i-1},i)}}_{\ast} \circ \pi_{(i-1,\underline{i})}^{\ast}( z^i_{n-i-d+j-1}).\end{equation}
Since $j-d+i-1<0$, the cycle ${\pi_{(\underline{i-1},i)}}_{\ast} \circ \pi_{(i-1,\underline{i})}^{\ast}( Z^i_{n-i-d+j-1})$ is trivial by dimensional reasons.

If $j-d+i-1=0$, combining (2.7), (2.8) and \cite[Proposition 2.1]{uINV}, one get that $c_{d-i+1}(T_{i-1})$ is congruent modulo $2$ to
\[{\pi_{(\underline{i-1},i)}}_{\ast} \circ \pi_{(i-1,\underline{i})}^{\ast}
(Z^{i}_{n-3i+d+1})+W^{i-1}_{d-i+1}+
{\pi_{(\underline{i-1},i)}}_{\ast} \circ \pi_{(i-1,\underline{i})}^{\ast}(W^i_{d-i}\cdot Z^i_{n-2i+1})\]
and the two last summands of the previous cycle are equal modulo $2$ by identity (2.9) (valid for any $1 \leq j \leq d$)
and the base of the induction.

Otherwise -- if $j-d+i-1>0$ -- one proceeds similarly combining (2.7) with (2.8)
and using identity (2.9).

\end{proof}

\section{Proof of the first part of Main Theorem}

We use notations and materials introduced in sections 1 and 2.
For any $i \in \{1,\dots, d\}$, we denote by $\theta_i$ the class of the subvariety 
\[\{(y,x_1,\dots,x_{i+1})\;|\;x_1,\dots x_{i+1} \in y\}\subset G_i \times X^{i+1}\]
in $\text{CH}(G_i\times X^{i+1})$. Viewing the cycle $\theta_i$ as a correspondence
$G_i\rightsquigarrow  X^{i+1}$,
we set 
\[\alpha_i:= (\theta_i)_{\ast}(Z^i_{n-i})+\rho_i   \in\text{CH}(X_{K}^{i+1}),\]
and we view $\alpha_i$ as a correspondence $X_{K}\rightsquigarrow X^{i}_{K}$.

\medskip

The following computations are the key point in the proof of the first part of Theorem\;1.1
(see Corollary 3.15).

\begin{prop}
\textit{For any $i \in \{1,\dots, d\}$, one has}
\[\left( \alpha_i \;(\text{mod}\;2)\right)_{\ast}(h^k)=\left\{\begin{array}{ll} 
\text{sym}\left((\times_{j=1}^{i-1}h^j)\times 1\right) & \textit{if} \:\: k=0  \textit{;}\\
0 & \textit{if}\:\: 1\leq k\leq i-1 \textit{;} \\
\text{sym}\left((\times_{j=1}^{i-1}h^j)\times h^k\right) & \textit{if}\:\: i \leq k \leq d \textit{.}
\end{array} \right. \]
\end{prop}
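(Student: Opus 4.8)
### Proof strategy

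The plan is to compute the pushforward $(\alpha_i)_\ast(h^k)$ directly from the definition $\alpha_i = (\theta_i)_\ast(Z^i_{n-i}) + \rho_i$, treating the two summands separately. Since $\rho_i = \mathrm{sym}\bigl((\times_{j=0}^{i-1}h^j)\times l_0\bigr)$ is an explicit symmetric cycle, viewed as a correspondence $X_K \rightsquigarrow X_K^i$ its action on $h^k$ is a routine computation using the intersection pairing on the quadric: the only surviving contributions come from matching $h^k$ against the factor of $\rho_i$ landing in the first copy of $X$, and one records the answer in terms of $\mathrm{sym}\bigl((\times_{j=1}^{i-1}h^j)\times l_0\bigr)$-type terms. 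The genuine work is in the first summand.

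For $(\theta_i)_\ast(Z^i_{n-i})$, I would first unwind the correspondence $\theta_i$ as the pushforward along $G_i\times X^{i+1}$ of the incidence class $\{(y,x_1,\dots,x_{i+1}) : x_j \in y\}$. Viewing $\alpha_i$ as a correspondence $X_K \rightsquigarrow X_K^i$ means pairing with $h^k$ on the initial $X$-factor; the key is to rewrite the whole composite as a pushforward/pullback along the flag-variety projections $\pi_{(\underline{0},i)}$ and $\pi_{(0,\underline{i})}$ so that Lemma 2.5 and Lemma 2.2 become applicable. Concretely, the incidence variety defining $\theta_i$ factors through $\mathcal{F}(0,i)$, and $(\theta_i)_\ast(Z^i_{n-i})$ acting on $h^k$ should reduce to expressions of the form ${\pi_{(0,\underline{i})}}_\ast\circ\pi_{(\underline{0},i)}^\ast$ applied to products $W^i_\bullet \cdot Z^i_{n-i-\bullet}$. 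At this point I would invoke Lemma 2.5 (with $k$ there matched to the power of $h$ and $m$ to the codimension shift) to expand these pushforwards into sums of $W^{i-1}_\bullet\cdot(\text{lower pushforwards})$, and combine with Lemma 2.6 relating these to Chern classes $c_\bullet(T_{i-1})$ modulo $2$.

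The main obstacle I anticipate is bookkeeping the three cases $k=0$, $1\le k\le i-1$, and $i\le k\le d$ uniformly, and in particular showing the vanishing in the middle range. The vanishing for $1\le k\le i-1$ should come from dimensional reasons together with the cancellation of the $\theta_i$-contribution against part of the $\rho_i$-contribution modulo $2$; I expect the Chern-class divisibility input of Lemma 2.6 and the fact that $c_j(T_{i-1})$ is congruent to an explicit pushforward to be exactly what forces these intermediate terms to die. For the two nontrivial ranges, the surviving terms from $(\theta_i)_\ast(Z^i_{n-i})$ should either combine with or exactly reproduce the symmetric expressions $\mathrm{sym}\bigl((\times_{j=1}^{i-1}h^j)\times 1\bigr)$ and $\mathrm{sym}\bigl((\times_{j=1}^{i-1}h^j)\times h^k\bigr)$. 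The delicate point will be tracking the factor-of-$2$ phenomena (the divisibility statements in Lemmas 2.2, 2.4(iv), and 2.6) so that everything is consistent after reduction modulo $2$; I would keep the computation integral as long as possible and reduce only at the end, mirroring the strategy used in the proof of Lemma 2.6.
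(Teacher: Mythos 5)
Your opening moves match the paper's: you split $\alpha_i$ into $(\theta_i)_\ast(Z^i_{n-i})$ and $\rho_i$, handle $\rho_i$ by the intersection pairing on $X_K$ (only the summands with $l_0$ in the first slot survive modulo $2$, since $\deg(h^{k+j})$ is even), and reduce the $\theta_i$-part to a pushforward from the flag variety of the form ${\pi_{(0,\underline{i})}}_\ast\circ\pi_{(\underline{0},i)}^\ast(h^k)\cdot Z^i_{n-i}$ paired against the incidence class $\eta_i$ on $G_i\times X^i$ (this is the paper's Lemma 3.2 and identity (3.3)). Two corrections, one minor and one serious. The minor one: for $1\leq k\leq i-1$ there is no cancellation between the two summands and no Chern-class input --- the $\theta_i$-contribution vanishes outright because ${\pi_{(0,\underline{i})}}_\ast\circ\pi_{(\underline{0},i)}^\ast(h^k)$ lands in negative codimension $k-i<0$, and $(\rho_i)_\ast(h^k)$ is separately even for $0<k\leq d$. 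Your expectation that Lemma 2.6 "forces these intermediate terms to die" misreads where that lemma is actually needed.

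The serious gap is in the range $i\leq k\leq d$. Saying you will "invoke Lemma 2.5 and combine with Lemma 2.6" does not give you a way to evaluate ${p_{G_i\times\underline{X^i}}}_\ast\left(w^i_{k-i}\cdot z^i_{n-i}\cdot\eta_i\right)$, because this class lives on $G_i\times X^i$ and involves the incidence class $\eta_i$, which is not of the form $\pi_\ast\pi^\ast$ of anything on $G_i$ alone; Lemmas 2.5 and 2.6 only apply once the problem has been moved down from $G_i$ to $G_{i-1}$. The paper's proof supplies exactly the machinery you are missing: a backward induction on $i$ whose base case $i=k=d$ is settled by the degree formula for products of the classes $z^d_{n-d-a_j}$ on the maximal grassmannian (together with the cellular decomposition (3.5) of $[\mathcal{F}(i,0)]$, which is what lets one expand $\eta_i$ into external products), and a descent step (Lemmas 3.6 and 3.10) expressing $z^i_{n-i}\cdot\eta_i$ through $z^{i-1}_{n-i+1}\cdot\eta_{i-1}$ via the two-step flag $\mathcal{F}(i-1,i)$. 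Only then do Lemma 2.5, Lemma 2.6 and the Whitney sum identity $\sum_j c_{k-i+1-j}(V\mathbbm{1}/T_{i-1})\cdot c_j(T_{i-1})=c_{k-i+1}(V\mathbbm{1})=0$ enter, to strip off all but the $j=0$ term and close the induction. Without the induction, the base-case degree computation, and the $\eta_i\rightsquigarrow\eta_{i-1}$ descent, your plan cannot produce the stated values of $\left(\alpha_i\;(\text{mod}\;2)\right)_\ast(h^k)$ for $i\leq k\leq d$.
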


\begin{proof}
The following lemma provides an appropriate formula
for $\left((\theta_i)_{\ast}(Z^i_{n-i})\right)_{\ast}$.
%with $a\in \text{CH}^k(X_{K})$.
We write $p$ with underlined target for projections. 

\begin{lemme}
\textit{For any} $x\in \text{CH}(X_{K})$\textit{, one has}
\[\left((\theta_i)_{\ast}(Z^i_{n-i})\right)_{\ast}(x)=
{p_{G_i\times \underline{X^{i}}}}_{\;\ast}\left( p_{\underline{G_i}\times X^{i}}^{\ast}\left(
{\pi_{(0,\underline{i})}}_{\ast} \circ \pi_{(\underline{0},i)}^{\ast}(x)   \cdot Z^i_{n-i}\right) \cdot \eta_i  \right),\]
\textit{where $\eta_i$ is the class in} $\text{CH}(G_i\times X^{i})$ \textit{of the subvariety
$\{(y,x_1,\dots,x_i)\;|\; x_1,\dots, x_i\in y\}\subset G_i\times X^i$.}
\end{lemme}

\begin{proof}

By definition, one has
\[(\theta_i)_{\ast}(Z^i_{n-i})={p_{G_i\times \underline{X^{i+1}}}}_{\;\ast}\left((Z^i_{n-i} \times [X^{i+1}])\cdot \theta_i\right)\]
so 
\[\left((\theta_i)_{\ast}(Z^i_{n-i})\right)_{\ast}(x)={p_{X\times \underline{X^i}}}_{\;\ast}\left((x\times [X^i])\cdot{p_{G_i\times \underline{X^{i+1}}}}_{\;\ast}\left((Z^i_{n-i} \times [X^{i+1}])\cdot \theta_i\right)\right).\]
Therefore, using the Projection Formula (see \cite[Proposition 56.9]{EKM}) and the following fiber product diagram with respect to projections
\[\xymatrix{
G_i\times X \times X^i \ar[r] \ar[d] & X \times X^i \ar[d] \\
G_i \times X^i \ar[r]  & X^i
},\]
one get that 
\[\left((\theta_i)_{\ast}(Z^i_{n-i})\right)_{\ast}(x)=
{p_{G_i\times \underline{X^{i}}}}_{\;\ast}\circ {p_{\underline{G_i}\times X\times \underline{X^i}}}_{\;\ast}
\left((Z^i_{n-i}\times x \times [X^i]) \cdot \theta_i  \right).\]
Moreover, the cycle $\theta_i$ can be rewritten as
\[\theta_i=p_{\underline{G_i\times X}\times X^{i}}^{\ast}\left([\mathcal{F}(i,0)]\right)\cdot
 p_{\underline{G_i}\times X\times \underline{X^i}}^{\ast}(\eta_i),\]
where $[\mathcal{F}(i,0)]$ is the class in $\text{CH}(G_i\times X)$ of the subvariety
$\mathcal{F}(i,0)\subset G_i\times X$.
Hence, using the Projection Formula twice, one get that 
\begin{align*}
\left((\theta_i)_{\ast}(Z^i_{n-i})\right)_{\ast}(x)  & =
 {p_{G_i\times \underline{X^{i}}}}_{\;\ast}\left(\left({p_{\underline{G_i}\times X}}_{\ast}
\left((Z^i_{n-i} \times x) \cdot [\mathcal{F}(i,0)]\right)\times [X^i]\right)\cdot \eta_i  \right) \\
 & = {p_{G_i\times \underline{X^{i}}}}_{\;\ast}\left(   \left(\left(
 {p_{\underline{G_i}\times X}}_{\ast}
 \left(p_{G_i\times \underline{X}}^{\ast}(x) \cdot [\mathcal{F}(i,0)]    \right)\cdot Z^i_{n-i}\right)\times [X^i]\right)\cdot \eta_i  \right) .
\end{align*}

Furthermore, by denoting the closed embedding $\mathcal{F}(i,0)\hookrightarrow G_i\times X$ as $in$,
one has $in_{\ast}(1)= [\mathcal{F}(i,0)]$. It follows again from the Projection Formula that 
$p_{G_i\times \underline{X}}^{\ast}(x)\cdot [\mathcal{F}(i,0)]=in_{\ast}\circ in^{\ast}\left(p_{G_i \times \underline{X}}^{\ast}(x)\right)$. 
Consequently, one has
\[{p_{\underline{G_i}\times X}}_{\ast}\left(p_{G_i \times \underline{X}}^{\ast}(x)\cdot [\mathcal{F}(i,0)] \right)
={\pi_{(0,\underline{i})}}_{\ast} \circ \pi_{(\underline{0},i)}^{\ast}(x) \]
and the lemma is proven.
\end{proof}

For any $x\in \text{CH}^k(X_{K})$ with $k\leq i-1$, the cycle 
${\pi_{(0,\underline{i})}}_{\ast} \circ \pi_{(\underline{0},i)}^{\ast}(x)$
is trivial by dimensional
reasons. Thus, by Lemma 3.2, the cycle $\left((\theta_i)_{\ast}(Z^i_{n-i})\right)_{\ast}(x)$ is also trivial.
Therefore, since $(\rho_i)_{\ast}(h^k)=\text{sym}\left((\times_{j=1}^{i-1}h^j)\times 1\right)$ for $k=0$ and is trivial for $0<k\leq d$,
one get the conclusion of Proposition 3.1 for the cases
 $k\leq i-1$.

\medskip

Note that for $i\leq k\leq d$, Lemma 3.2 provides the following identity
\begin{equation}\left( \alpha_i \;(\text{mod}\;2)\right)_{\ast}(h^k)=
{p_{G_i\times \underline{X^{i}}}}_{\;\ast}\left( 
w^i_{k-i}   \cdot z^i_{n-i} \cdot \eta_i \right) \; \text{in}\; \text{Ch}(X^i_K),
\end{equation}
where we abuse notation and write $\eta_i$ for $\eta_i \;(\text{mod}\;2)$.

\medskip

We prove the cases $i\leq k\leq d$ of Proposition 3.1 by backward induction on $i$.

\medskip

For any $1\leq i \leq d$, by the very definition, one has
\begin{equation}
\eta_i= \prod_{j=1}^i  \left(\text{Id}_{G_i}\times p_{X^i_j} \right)^{\ast}\left([\mathcal{F}(i,0)]\right) \;\text{in}\; \text{CH}(G_i\times X^i),
\end{equation}
with $p_{X^i_j}$ the projection from $X^i$ to the $j$-th coordinate.
Since $X_K$ is cellular, the cycle $[\mathcal{F}(i,0)]$ decomposes as 
\begin{equation} [\mathcal{F}(i,0)]=\sum_{m=0}^d z^i_{n-i-m}\times h^m +
\sum_{m=i}^d w^i_{m-i}\times l_m  \;\text{in}\; \text{Ch}({G_i}_K\times X_K),
\end{equation}
where $l_d$ has to be replaced by the other class $l_d'$ of maximal totally isotropic subspaces if
$n=2d$ and $l_d^2$ is not zero (i.e. if $4$ divides $n$).
\medskip

The base of the backward induction $i=d$ (so $i=k=d$) is obtained by combining 
the identities (3.3), (3.4)  and (3.5) for $i=d$ (recall also that $w^i_0=1$ for any $0\leq i \leq d$ by Lemma 2.4(i)) 
with the fact that, for any integers $0\leq a_0\leq a_1 \leq \cdots \leq a_{e} \leq d$, with $e\leq d$, one has
\[\text{deg}\left(\prod_{j=0}^e z^d_{n-d-a_j}    \right)=\left \{\begin{array}{ll} 1 & \;\;\text{if}\;\;              \{a_0, a_1, \dots , a_e\}=\{0,1,\dots , d\}\; ; \\
 0 &\;\; \text{otherwise} ,\end{array}\right.\]
where $\text{deg}:\text{Ch}({G_d}_K)\rightarrow \text{Ch}(\text{Spec}(K))=\mathbb{Z}/2\mathbb{Z}$ is 
the homomorphism associated with the push-forward of the structure morphism, see \cite[Lemma 87.6]{EKM}.

\medskip

The backward induction step will follow from Lemma 3.10, which make use of the following statement.

\begin{lemme}
\textit{For any $2\leq i\leq d$, one has}
\[
 z^i_{n-i} \cdot \eta_i
= \left( \pi_{(i-1,\underline{i})} \times \text{Id}_{X^i} \right)_{\ast}\left(
\left( \pi_{(i-1,\underline{i})} \times p_{X^i_i} \right)^{\ast}\left([\mathcal{F}(i,0)] \right)
\cdot \left(\pi_{(\underline{i-1},i)}\times p_{X^i_{\overline{i}}}\right)^{\ast}
(z^{i-1}_{n-i+1}\cdot \eta_{i-1})
\right) .\]
\end{lemme}

\begin{proof}
It follows from the identity (3.5) and Lemma 2.4(ii), (iii) and (iv) that
%\begin{align*}
%\left(\pi_{(\underline{i-1},i)}\times \text{Id}_{X}\right)^{\ast}\left([\mathcal{F}(i-1,0)]\right) &  = 
%\left(c_1(\mathcal{O}(1))\times 1\right)\cdot \left(\pi_{(i-1,\underline{i})}\times %\text{Id}_{X}\right)^{\ast}\left([\mathcal{F}(i,0)]\right)\\
% & +  \left(\pi_{(i-1,\underline{i})}\times \text{Id}_{X}\right)^{\ast}\left( 
%(1\times h)\cdot \left([\mathcal{F}(i,0)] - Z^i_{n-i-d}\times h^d \right)\right)\\
%& +  2 \pi_{(i-1,\underline{i})}^{\ast}(Z^{i}_{d-i+1})\times l_d
%\end{align*}
%in $\text{CH}(\mathcal{F}(i-1,i)_K \times X_K)$.
%Considered modulo $2$, this gives
\begin{equation}
\left(\pi_{(\underline{i-1},i)}\times \text{Id}_{X}\right)^{\ast}\left([\mathcal{F}(i-1,0)]\right)=
\left(c_1(\mathcal{O}(1))\times 1 + 1\times h \right)\cdot
\left(\pi_{(i-1,\underline{i})}\times \text{Id}_{X}\right)^{\ast}\left([\mathcal{F}(i,0)]\right)
\end{equation}
in $\text{Ch}(\mathcal{F}(i-1,i)_K \times X_K)$.

Moreover, by (3.4), one has
\begin{equation}
\left(\pi_{(\underline{i-1},i)}\times p_{X^i_{\overline{i}}}\right)^{\ast}
(z^{i-1}_{n-i+1}\cdot \eta_{i-1})=
\pi_{(\underline{i-1},i)}^{\ast}(z^{i-1}_{n-i+1})\cdot \prod_{j=1}^{i-1}
\left( \pi_{(\underline{i-1},i)} \times p_{X^i_j} \right)^{\ast}\left([\mathcal{F}(i-1,0)] \right)
\end{equation}
with $p_{X^i_{\overline{i}}}$ the projection from $X^i$ to the $i-1$ first coordinates.
By (3.7), the right member of the previous equation can be rewritten as
\[
\pi_{(\underline{i-1},i)}^{\ast}(z^{i-1}_{n-i+1})\cdot \prod_{j=1}^{i-1}
\left(
c_1(\mathcal{O}(1))\times [X^i] + 1\times p_{X^i_j}^{\ast}(h)\right)
\cdot \prod_{j=1}^{i-1}
\left( \pi_{(i-1,\underline{i})} \times p_{X^i_j} \right)^{\ast}\left([\mathcal{F}(i,0)] \right)
.\]
Hence, by multiplying the equation (3.8) by 
$\left( \pi_{(i-1,\underline{i})} \times p_{X^i_i} \right)^{\ast}\left([\mathcal{F}(i,0)] \right)$ 
and using Lemma 2.4(ii), one get
\begin{multline}
\left( \pi_{(i-1,\underline{i})} \times p_{X^i_i} \right)^{\ast}\left([\mathcal{F}(i,0)] \right)
\cdot \left(\pi_{(\underline{i-1},i)}\times p_{X^i_{\overline{i}}}\right)^{\ast}
(z^{i-1}_{n-i+1}\cdot \eta_{i-1}) =  \\
 \left(c_1(\mathcal{O}(1))\times [X^i]\right) \cdot  \prod_{j=1}^{i-1}
\left(
c_1(\mathcal{O}(1))\times [X^i] + 1\times p_{X^i_j}^{\ast}(h)\right)
\cdot \left( \pi_{(i-1,\underline{i})} \times \text{Id}_{X^i} \right)^{\ast}( z^i_{n-i} \cdot \eta_i)
.
\end{multline}
Furthermore, it follows from the Projective Bundle Theorem (see \cite[Theorem 53.10]{EKM})
applied to ${ \pi_{(i-1,\underline{i})}}: \mathcal{F}(i-1,i)\rightarrow G_i$ that 
${ \pi_{(i-1,\underline{i})}}_{\ast}(c_1(\mathcal{O}(1))^i)=1$.
Since $\text{dim}\;\mathcal{F}(i-1,i)-\text{dim}\;G_i=i$,
one obtains the conclusion of the lemma by composing the 
equation (3.9) by $\left( \pi_{(i-1,\underline{i})} \times \text{Id}_{X^i} \right)_{\ast}$
and using the Projection Formula.

\end{proof}

\begin{lemme}
\textit{For any $2\leq i\leq d$ and $i\leq k \leq d$, the cycle in identity (3.3) can be rewritten as}
\[
\sum_{m=0}^k \sum_{j=\text{max}(i-m,0)}^{\text{min}(k-m,i)}
 {p_{G_{i-1}\times \underline{X^{i-1}}}}_{\;\ast}\left( 
w^{i-1}_{k-m-j}\cdot \sigma_{i-1}^j   \cdot z^{i-1}_{n-i+1} \cdot \eta_{i-1}  \right)\times h^m ,\]
\textit{with} $\sigma_{i-1}^j={\pi_{(\underline{i-1},i)}}_{\ast} \circ \pi_{(i-1,\underline{i})}^{\ast}
(z^{i}_{n-2i+j}) \in \text{Ch}^j({G_{i-1}}_K)$.
\end{lemme}

\begin{proof}
It follows from Lemma 3.6 and the Projection Formula that the cycle in identity (3.3) can be rewritten as
\[
{p_{G_{i-1}\times \underline{X^{i}}}}_{\;\ast}\left( 
\left( \pi_{(\underline{i-1},i)} \times \text{Id}_{X^i} \right)_{\ast}\circ
\left( \pi_{(i-1,\underline{i})} \times p_{X^i_i} \right)^{\ast}\left(w^i_{k-i}\cdot [\mathcal{F}(i,0)] \right)
\cdot z^{i-1}_{n-i+1} \cdot (\eta_{i-1}\times 1) \right)
.\]
If $k<d$ then, by decomposition (3.5) and dimensional considerations, the latter cycle is equal to
\[\sum_{m=0}^k {p_{G_{i-1}\times \underline{X^{i-1}}}}_{\;\ast}\left(
 {\pi_{(\underline{i-1},i)}}_{\ast} \circ \pi_{(i-1,\underline{i})}^{\ast}(w^i_{k-i}\cdot z^i_{n-i-m})
\cdot z^{i-1}_{n-i+1} \cdot \eta_{i-1}
\right)\times h^m
\]
and, if $k=d$, one also has to consider the extra term
\[{p_{G_{i-1}\times \underline{X^{i-1}}}}_{\;\ast}\left(
 {\pi_{(\underline{i-1},i)}}_{\ast} \circ \pi_{(i-1,\underline{i})}^{\ast}(w^i_{d-i}\cdot w^i_{d-i})
\cdot z^{i-1}_{n-i+1} \cdot \eta_{i-1}
\right)\times l_d\] (or $l_d'$),
 which is trivial since ${\pi_{(\underline{i-1},i)}}_{\ast} \circ \pi_{(i-1,\underline{i})}^{\ast}(w^i_{d-i}\cdot w^i_{d-i})=0$. 
Indeed, by Lemma 2.4(iii), the Projection Formula and dimensional reasons, ${\pi_{(\underline{i-1},i)}}_{\ast} \circ \pi_{(i-1,\underline{i})}^{\ast}(w^i_{d-i}\cdot w^i_{d-i})$ is equal
to \[{\pi_{(\underline{i-1},i)}}_{\ast} \left( \pi_{(i-1,\underline{i})}^{\ast}(w^i_{d-i-1})\cdot c_1(\mathcal{O}(1))
\cdot \pi_{(i-1,\underline{i})}^{\ast}(w^i_{d-i})\right),\]
which can be rewritten as 
${\pi_{(\underline{i-1},i)}}_{\ast} \left( \pi_{(i-1,\underline{i})}^{\ast}(w^i_{d-i-1})
\cdot \pi_{(\underline{i-1},i)}^{\ast}(w^{i-1}_{d-i+1})\right)$ (Lemma 2.4(iv)) and therefore is zero by the Projection Formula and dimensional reasons.
Lemma 2.5 completes the proof.
\end{proof}

Let $2\leq i\leq d$. On the one hand, by identity (3.3) and backward induction hypothesis, for any $i\leq k\leq d$, one has
\begin{equation} {p_{G_i\times \underline{X^{i}}}}_{\;\ast}\left( 
w^i_{k-i}   \cdot z^i_{n-i} \cdot \eta_i \right)
=\text{sym}\left((\times_{j=1}^{i-1}h^j)\times h^k\right).\end{equation}
Therefore, in particular, the coordinate of (3.11) on top right $h^{i-1}$, i.e., 
\[{p_{\underline{X^{i-1}}\times X}}_{\;\ast}\left(\left({p_{G_i\times \underline{X^{i}}}}_{\;\ast}\left( 
w^i_{k-i}   \cdot z^i_{n-i} \cdot \eta_i \right)\right)\cdot [X^{i-1}]\times l_{i-1}\right)\]
is equal to
\begin{equation}\text{sym}\left((\times_{j=1}^{i-2}h^j)\times h^k\right).\end{equation}
On the other hand, by Lemma 3.10, this coordinate is also equal to
\begin{equation}\sum_{j=1}^{\text{min}(k-i+1,i)}
 {p_{G_{i-1}\times \underline{X^{i-1}}}}_{\;\ast}\left( 
w^{i-1}_{k-i+1-j}\cdot \sigma_{i-1}^j   \cdot z^{i-1}_{n-i+1} \cdot \eta_{i-1}  \right).\end{equation}
%Let us denote by $V\mathbbm{1}$ the trivial vector bundle on $G_{i-1}$.
Since 
$W^{i-1}_{k-i+1-j}=c_{k-i+1-j}(V\mathbbm{1}/T_{i-1})$
(see \cite[Proposition 2.1]{uINV}) and $\sigma_{i-1}^j =$$c_j(T_{i-1})$$\;(\text{mod}\;2)$ (see Lemma 2.6), by Whitney Sum Formula, one has
\[\sum_{j=0}^{\text{min}(k-i+1,i)}w^{i-1}_{k-i+1-j}\cdot \sigma_{i-1}^j  
=\sum_{j=0}^{\text{min}(k-i+1,i)} c_{k-i+1-j}(V\mathbbm{1}/T_{i-1})\cdot c_j(T_{i-1})
=c_{k-i+1}(V\mathbbm{1})=0.\]
Consequently, in view of (3.12) and (3.13), one get
\begin{equation}{p_{G_{i-1}\times \underline{X^{i-1}}}}_{\;\ast}\left( 
w^{i-1}_{k-i+1} \cdot z^{i-1}_{n-i+1} \cdot \eta_{i-1}  \right)=\text{sym}\left((\times_{j=1}^{i-2}h^j)\times h^k\right).\end{equation}
Note that, by identities (3.3) and (3.14), it only remains to prove that
\[{p_{G_{i-1}\times \underline{X^{i-1}}}}_{\;\ast}\left( 
 z^{i-1}_{n-i+1} \cdot \eta_{i-1}  \right)=\text{sym}\left(\times_{j=1}^{i-1}h^{j}\right)\]
to complete the backward induction step.
On the one hand, by backward induction hypothesis, 
the coordinate of (3.11) on top right $h^{k}$ is
\[\text{sym}\left(\times_{j=1}^{i-1}h^{j}\right)\] 
and
on the other hand, by Lemma 3.10, it is also equal to 
\[{p_{G_{i-1}\times \underline{X^{i-1}}}}_{\;\ast}\left( 
 z^{i-1}_{n-i+1} \cdot \eta_{i-1}  \right).\]

Proposition 3.1 is proven.

\end{proof}

We are now able to prove the first part of the main result of this note (Theorem 1.1), which we restate below.

\begin{cor}
 \textit{Let $i \in \{0,\dots, d\}$. If $z^i_{n-i}$ is rational then} $\rho_i\;(\text{mod}\;2)$ \textit{is also rational.}
\end{cor}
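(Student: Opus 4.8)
The plan is to reduce the statement to the rationality of $\alpha_i\;(\text{mod}\;2)$ and then to identify that class, up to manifestly rational cycles, with the class $\Delta_i\;(\text{mod}\;2)$ already provided by Lemma 2.2. The case $i=0$ is trivial, since $\rho_0=l_0=Z^0_n$ gives $\rho_0\;(\text{mod}\;2)=z^0_n$, so I assume $1\leq i\leq d$. By the very definition of $\alpha_i$ one has $\rho_i=\alpha_i-(\theta_i)_{\ast}(Z^i_{n-i})$ in $\text{CH}(X^{i+1}_K)$. The correspondence $\theta_i$ is the class of a subvariety defined over $F$, hence is rational; therefore, if $z^i_{n-i}$ is rational, a rational representative pushes forward under $\theta_i$ to a rational cycle whose reduction modulo $2$ is $(\theta_i)_{\ast}(z^i_{n-i})$. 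Consequently $\rho_i\;(\text{mod}\;2)$ is rational as soon as $\alpha_i\;(\text{mod}\;2)$ is, and it suffices to prove the latter.

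Next I would compute the action of $\Delta_i$, viewed as a correspondence $X_K\rightsquigarrow X^i_K$ (the choice of source factor is immaterial, as $\Delta_i$ is symmetric under $S_{i+1}$), on the powers $h^k$ for $0\leq k\leq d$. Writing $\Delta_i=\rho_i+\beta_i$ with $\beta_i=\sum_{k=i}^d\text{sym}\bigl((\times_{j=1}^{i-1}h^j)\times h^k\times l_k\bigr)$, and using that for a multiplicity-free family the symmetric correspondence acts by deleting the source factor with weight its intersection degree, together with $\deg(h^m\cdot h^k)\equiv 0$ and $\deg(l_m\cdot h^k)\equiv\delta_{m,k}\;(\text{mod}\;2)$, one finds that $\bigl(\Delta_i\;(\text{mod}\;2)\bigr)_{\ast}(h^k)$ equals $\text{sym}((\times_{j=1}^{i-1}h^j)\times 1)$ for $k=0$, vanishes for $1\leq k\leq i-1$, and equals $\text{sym}((\times_{j=1}^{i-1}h^j)\times h^k)$ for $i\leq k\leq d$. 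This is precisely the formula of Proposition 3.1 for $\alpha_i\;(\text{mod}\;2)$. Hence the difference $\delta:=\alpha_i\;(\text{mod}\;2)+\Delta_i\;(\text{mod}\;2)$ is a symmetric cycle whose action on every power $h^0,\dots,h^d$ vanishes.

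The crux is then to show that such a $\delta$ is rational. Here I would exploit that, modulo $2$, a symmetrization $\text{sym}(a_0\times\cdots\times a_i)$ vanishes whenever two of the $a_j$ coincide (its stabilizer has even order), so $\delta$ is a $\mathbb{Z}/2$-combination of classes $\text{sym}(S)$ indexed by multiplicity-free subsets $S$ of a standard basis $\{h^0,\dots,h^d,l_0,\dots,l_{d-1}\}$ of $\text{Ch}(X_K)$. For such an $S$ the computation above gives $\bigl(\text{sym}(S)\bigr)_{\ast}(h^k)=\text{sym}(S\setminus\{l_k\})$ if $l_k\in S$ and $0$ otherwise; since the $\text{sym}(T)$ for distinct multiplicity-free $T$ are linearly independent in $\text{Ch}(X^i_K)$, the vanishing of $\delta_{\ast}(h^k)$ for all $k$ forces the coefficient of $\text{sym}(S)$ to be zero whenever $S$ meets some $l_k$. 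Thus $\delta$ is supported on subsets $S\subset\{h^0,\dots,h^d\}$, i.e. it is a combination of symmetrizations of external products of powers of $h$; as $h$ is rational, each such cycle is rational, whence $\delta$ is rational. Combined with the rationality of $\Delta_i\;(\text{mod}\;2)$ from Lemma 2.2, this gives the rationality of $\alpha_i\;(\text{mod}\;2)$ and finishes the argument.

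The main obstacle I anticipate is the bookkeeping of the middle-dimensional classes when $n=2d$: there the two families $l_d,l_d'$ and the relation $h^d=l_d+l_d'$ oblige one to fix an orientation and to separate the cases $l_d^2=0$ and $l_d^2\neq 0$, exactly as in the proof of Lemma 2.2 and in the decomposition (3.5). In that range the clean pairing $\deg(l_m\cdot h^k)\equiv\delta_{m,k}$ must be replaced by its correct even-dimensional counterpart, and one has to verify that the top class $l_d$ is still detected through the action on $h^d$; away from this middle codimension the reasoning is as sketched.
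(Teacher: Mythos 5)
Your proposal follows essentially the same route as the paper: reduce to the rationality of $\alpha_i\;(\text{mod}\;2)$ via the rationality of the correspondence $\theta_i$, invoke Proposition 3.1, and conclude that $\alpha_i\;(\text{mod}\;2)$ differs from the rational cycle $\Delta_i\;(\text{mod}\;2)$ by nonessential elements. The paper compresses the last step into the phrase ``in view of the ring structure of $\text{CH}(X_K^{i+1})$ and the symmetry of $\alpha_i$''; you expand it, and your expansion is where the one real inaccuracy sits. You assert that because $\text{sym}(a_0\times\cdots\times a_i)$ vanishes mod $2$ when two factors coincide, the symmetric cycle $\delta=\alpha_i+\Delta_i\;(\text{mod}\;2)$ must be a $\mathbb{Z}/2$-combination of symmetrizations of multiplicity-free products. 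That inference is false: the space of $S_{i+1}$-invariant cycles mod $2$ is spanned by \emph{orbit sums} of external products of basis elements, and orbit sums of products with repeated factors (e.g.\ $h\times h$ in $\text{Ch}(X_K^2)$) are invariant without lying in the image of $\text{sym}$; since $\alpha_i$ contains the a priori unknown term $(\theta_i)_{\ast}(Z^i_{n-i})$, you cannot exclude such orbits by inspection. The gap is reparable with the same mechanism you use: for any multiset $M$ of basis elements, the action of its orbit sum on $h^k$ is the orbit sum of $M\setminus\{l_k\}$ if $l_k\in M$ (each arrangement of the remainder occurs exactly once, so no parity loss) and is trivial otherwise, and these outputs for distinct $M$ have disjoint supports; hence the vanishing of $\delta_{\ast}(h^k)$ for all $k$ still kills every orbit meeting $\{l_0,\dots\}$ and leaves only products of powers of $h$, which are nonessential. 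With that substitution (and the care you already flag in the middle dimension when $n=2d$), your argument is correct and coincides with the paper's.
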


\begin{proof}
Since the conclusion is obvious for $i=0$, we assume that $i\geq 1$.
In view of the ring structure of $\text{CH}(X_{K}^{i+1})$ (see \cite[\S 68]{EKM}), and
knowing that the cycle $\alpha_i$ is symmetric, one
deduces from Proposition 3.1 that
\[\alpha_i\;(\text{mod}\;2)=\Delta_i\;(\text{mod}\;2)+\beta\]
with $\beta$ a sum of nonessential elements (a nonessential element is an external product of powers of the hyperplane class, it is always rational).
Since $\alpha_i= (\theta_i)_{\ast}(Z^i_{n-i})+\rho_i$ and $\Delta_i\;(\text{mod}\;2)$ is rational (Lemma 2.1), the corollary is proved. 
\end{proof}

The following statement is a consequence of Proposition 3.1 and its proof.
%Recall that
%$\text{deg}:\text{Ch}({G_i}_K)\rightarrow \text{Ch}(\text{Spec}(K))=\mathbb{Z}/2\mathbb{Z}$ is 
%the homomorphism associated with the push-forward of the structure morphism.

\begin{prop}
\textit{Let $1\leq i\leq d-1$, $i+1\leq k \leq d$ and $1\leq m\leq i$.
For any integers $0\leq a_1\leq a_2 \leq \cdots \leq a_i \leq d$, the integer}
\[\text{deg}\left(\left(Z^i_{n-i}\cdot \prod_{l=1}^i Z^i_{n-i-a_l}\right) \cdot \left(
\sum_{j=0}^{i-m}W^i_{k-m-j}\cdot c_j(T_i)  \right)  \right)\]
\textit{is congruent to} $1\,(\text{mod}\,2)$ \textit{if $\{a_1, \dots , a_i\}=\{k\} \cup (\{1,2,\dots , i\}\backslash \{m\})$
and to} $0\,(\text{mod}\,2)$ \textit{otherwise.}
\end{prop}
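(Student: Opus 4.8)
The plan is to reduce the claimed degree computation to the already-established machinery of Proposition 3.1 and Lemma 2.6, interpreting the degree as a coefficient read off from one of the symmetric cycles that were computed there. The key observation is that the inner sum $\sum_{j=0}^{i-m}W^i_{k-m-j}\cdot c_j(T_i)$ is, up to the divisibility corrections already handled in Lemma 2.6, precisely the kind of expression that appears when one pushes forward along $\pi_{(\underline{i-1},i)}\circ \pi_{(i-1,\underline{i})}^\ast$ and invokes the Whitney Sum Formula. So my first step would be to rewrite $c_j(T_i)\;(\text{mod}\;2)$ using Lemma 2.6, turning $c_j(T_i)$ into ${\pi_{(\underline{i},i+1)}}_{\ast}\circ \pi_{(i,\underline{i+1})}^{\ast}(z^{i+1}_{n-2(i+1)+j})$, and then recognize the combination $\sum_j W^i_{k-m-j}\cdot c_j(T_i)$ as a push-forward of a single product $w^{i+1}_{\ast}\cdot z^{i+1}_{\ast}$ via Lemma 2.5, read in the opposite direction.

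\textbf{Second}, I would translate the degree into the language of the cycles $z^i_{n-i}\cdot \eta_i$ studied in Proposition 3.1. The factor $Z^i_{n-i}\cdot \prod_{l=1}^i Z^i_{n-i-a_l}$ is exactly the type of top-degree product on ${G_i}_K$ whose degree detects whether the multiset $\{0,a_1,\dots,a_i\}$ is $\{0,1,\dots,i\}$ (this is the orthogonal-grassmannian analogue of the degree formula used at the base of the backward induction, stated there for $G_d$). Combining this with the inner Chern-class sum, the whole degree should equal the coefficient of a specific monomial $\text{sym}\bigl((\times_{j=1}^{i-1}h^j)\times h^k\bigr)$-type term in the expansion of $\bigl(\alpha_i\;(\text{mod}\;2)\bigr)_\ast(h^k)$ as computed in Proposition 3.1, read against the basis $\{h^a\times l_b\}$ of $\text{Ch}(X^i_K)$. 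Concretely, I expect the degree to pick out the coordinate of $\text{sym}\bigl((\times_{j=1}^{i-1}h^j)\times h^k\bigr)$ against the dual basis element indexed by $\{a_1,\dots,a_i\}$, which by the symmetric structure is nonzero modulo $2$ exactly when $\{a_1,\dots,a_i\}=\{k\}\cup(\{1,\dots,i\}\setminus\{m\})$.

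\textbf{Third}, I would make the pairing precise. The cycle $\text{sym}\bigl((\times_{j=1}^{i-1}h^j)\times h^k\bigr)$ is a sum over permutations of monomials $h^{b_1}\times\cdots\times h^{b_i}$ with $\{b_1,\dots,b_i\}=\{1,\dots,i-1,k\}$; pairing it against $l_{a_1}\times\cdots\times l_{a_i}$ via the degree map on $X^i_K$ uses $\deg(h^a\cdot l_b)=\delta_{a,b}$ and yields $1\;(\text{mod}\;2)$ precisely when the multisets match, i.e. when $\{a_1,\dots,a_i\}=\{1,\dots,i-1,k\}$. The role of $m$ is to account for which slot carries $h^k$ versus the omitted power; running through the bookkeeping, the condition $\{a_1,\dots,a_i\}=\{k\}\cup(\{1,\dots,i\}\setminus\{m\})$ emerges as the exact matching condition once the shift between $W^i_{k-m-j}\cdot c_j(T_i)$ and the $h^k$-coordinate (governed by the summation range $\max(i-m,0)\le j\le \min(k-m,i)$ from Lemma 2.5) is tracked.

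\textbf{The main obstacle} I anticipate is the indexing reconciliation between the two sides: the statement offers a compact Chern-class sum $\sum_{j=0}^{i-m}W^i_{k-m-j}\cdot c_j(T_i)$, whereas Lemma 2.5 and Lemma 2.6 produce sums with truncated ranges $\max(i-m,0)\le j\le \min(k-m,i)$ and carry the divisibility-by-$2$ corrections for the top Chern classes $c_l(V\mathbbm{1}/T_{i-1})$ with $l>d-i+1$. Verifying that the extra terms outside the truncated range either vanish by dimensional reasons on ${G_i}_K$ or are even (hence invisible modulo $2$), and that the parameter $m$ lines up consistently between the Whitney expansion and the combinatorial matching condition, is where the care is needed; everything else is a transcription of the degree formula and the projection-formula manipulations already carried out in the proofs of Proposition 3.1 and Lemma 2.6.
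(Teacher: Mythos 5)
Your proposal is essentially the paper's own argument: the paper proves Proposition 3.16 by equating the two descriptions of the coordinate of the cycle (3.3) on top right $h^m$ coming from Proposition 3.1 and from Lemma 3.10 (after decomposing $\eta$ via (3.4)--(3.5) and reading coefficients against the $h^a\times l_b$ basis), and then applying the Whitney Sum Formula to pass from the truncated summation range of Lemma 2.5 to the range $0\le j\le i-m$ --- exactly the plan and the ``main obstacle'' you identify. The one correction: since the statement involves $c_j(T_i)$ and the sum over $j$, the comparison must be run one level up, i.e.\ with $\bigl(\alpha_{i+1}\;(\text{mod}\;2)\bigr)_{\ast}(h^k)$ and Lemma 3.10 for $\mathcal{F}(i,i+1)\rightarrow G_i$ (so the base matching set is $\{1,\dots,i,k\}$ with $m$ removed), not with $(\alpha_i)_{\ast}(h^k)$ as written in your second step; your first step already uses the level-$(i+1)$ objects, so this is a notational slip rather than a gap.
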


\begin{proof}
Proposition 3.1 and Lemma 3.10 provide two descriptions of the coordinate of the cycle (3.3) on top right $h^m$ (note that one can decompose the cycle $\eta_i\in \text{Ch}({G_i}_K\times X^i_K)$ appearing in Lemma 3.10 as a sum of external products by combining equations (3.4) and (3.5)).
The conclusion is obtained by comparing these two descriptions and applying the Whitney Sum Formula
to that given by Lemma 3.10.
\end{proof}

\begin{rem}
We also retrieve \cite[Statement 2.15]{uINV} by proceeding the same way as in the previous proof but considering the coordinate
on top right $h^k$ instead of top right $h^m$.
Proposition 3.16 is completed for $i=d$ by \cite[Lemma 87.6]{EKM}.
\end{rem}

\section{Elementary Discrete Invariant and first Witt index}

In this section, we continue to use notations and materials introduced in the previous sections.
Corollary 3.15 implies the following condition on the first Witt index $i_1$ of the quadric $X$.

\begin{prop}
\textit{Assume $X$ anisotropic and let $i \in \{1,\dots, d\}$. If $z^i_{n-i}$ is rational then $i_1\leq i$.}
\end{prop}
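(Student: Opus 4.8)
The plan is to feed the already-established direction of the Main Theorem into a parity argument for the first Witt index, using only Corollary 3.15. Since $z^i_{n-i}$ is assumed rational, Corollary 3.15 gives at once that $\rho_i\;(\mathrm{mod}\;2)$ is rational in $\text{Ch}(X^{i+1}_K)$. So everything reduces to the purely quadratic statement that, for anisotropic $X$, rationality of $\rho_i\;(\mathrm{mod}\;2)$ forces $i_1\le i$. I would argue this by contraposition: assuming $i_1\ge i+1$, I aim to contradict the rationality of $\rho_i\;(\mathrm{mod}\;2)$.

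The engine I would use is a degree pairing on $X^{i+1}$ combined with Springer's theorem. Over $K$ the quadric is split, and $\rho_i\;(\mathrm{mod}\;2)$ is the sum, over all rearrangements, of the monomials whose factors are $1,h,\dots,h^{i-1}$ together with a single point class $l_0$; since these $i+1$ factors are pairwise distinct, each rearrangement occurs with coefficient $1$. Pairing $\rho_i$ against the dual cycle $1\times l_0\times l_1\times\dots\times l_{i-1}$ through the degree map isolates exactly one monomial and produces an odd number. On the other hand, Springer's theorem guarantees that over a field over which $X$ stays suitably anisotropic every closed point of $X^{i+1}$ has even degree, so the degree of a product of two rational cycles is even. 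The tension between these two facts is what produces the bound: the dual cycle becomes rational precisely once enough of the classes $l_0,\dots,l_{i-1}$ are defined, which is governed by the Witt index, whereas the even-degree constraint persists as long as an anisotropic kernel survives.

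The hard part will be reconciling these two requirements, which pull in opposite directions: making $1\times l_0\times\dots\times l_{i-1}$ rational forces one to raise the Witt index (passing, say, to $F(X)$ or to the function field of a suitable orthogonal Grassmannian), whereas the even-degree parity needs the quadric to stay anisotropic. The resolution I would pursue is to run the degree argument not with the full structure morphism but with the projection onto the anisotropic kernel $X'$ of $q$ over the chosen extension, so that Springer parity is applied to $X'$; the hypothesis $i_1\ge i+1$ is exactly what keeps this kernel large enough for the isolated monomial to survive with odd multiplicity. Carrying out this bookkeeping — pinning down the correct extension and the correct motivic projector onto $X'$ so that the odd coefficient is genuinely detected — is the crux of the argument. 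The case $i=1$ is the classical statement that the Rost correspondence $1\times l_0+l_0\times 1$ is rational if and only if $i_1=1$, and I would use it both as the base case and as the model for the general bookkeeping.
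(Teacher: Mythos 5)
Your first step coincides with the paper's: Corollary 3.15 reduces everything to showing that rationality of $\rho_i\;(\text{mod}\;2)$ forces $i_1\le i$ (this is Lemma 4.2 in the paper). But the core of your argument has a genuine gap, and it is exactly the one you flag yourself as ``the crux.'' The degree pairing against $1\times l_0\times l_1\times\cdots\times l_{i-1}$ does isolate a single monomial of $\rho_i$ with odd multiplicity, but this dual cycle is rational over a field $E$ only when $i_W(q_E)\ge i$, hence only when $X_E$ is isotropic --- and then Springer parity for $0$-cycles on $X_E^{i+1}$ is simply false ($X_E$ has rational points, so degree-one closed points exist). Over $F$ itself, where Springer does apply, the only rational cycles available for the pairing without further input are the nonessential ones, and a direct computation shows that $\deg\bigl(\rho_i\cdot(h^{a_0}\times\cdots\times h^{a_i})\bigr)$ is always even for $i\ge 1$ (every factor not carrying $l_0$ contributes $\deg(h^n)=2$), so no contradiction can be extracted this way. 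Your proposed repair --- running the degree count through a projector onto the anisotropic kernel over a suitable extension --- is precisely the nontrivial content of the lemma, and it is not carried out; ``pinning down the correct extension and the correct motivic projector'' is not bookkeeping but the whole proof.

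For contrast, the paper avoids the degree pairing entirely and argues by descent on $i$: assuming $\rho_i\;(\text{mod}\;2)$ rational and $i_1>i$, it composes $\rho_i\;(\text{mod}\;2)$ with the rational correspondence $(1\times h^{i_1-i})\cdot\pi$, where $\pi$ is the $1$-primordial cycle of \cite[Definition 73.16]{EKM} (whose shape $1\times l_{i_1-1}+l_{i_1-1}\times 1+\cdots$ is exploited here, together with $i_1>i$, so that the tail terms die). The composition equals $1\times\rho_{i-1}\;(\text{mod}\;2)$, and pulling back along a partial diagonal shows $\rho_{i-1}\;(\text{mod}\;2)$ is rational; iterating reaches $\rho_1$, where \cite[Remark 80.9]{EKM} gives $i_1=1$, contradicting $i_1>i\ge 1$. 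If you want to complete your approach you would need to supply an analogue of this step --- some rational correspondence that trades isotropy information for the missing factors $l_0,\dots,l_{i-1}$ while keeping the parity count over the base field --- which is essentially reinventing the primordial-cycle argument.
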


\begin{proof}
In view of Corollary 3.15, the statement follows from the next lemma.

\begin{lemme}
\textit{Assume $X$ anisotropic and let $i \in \{1,\dots, d\}$. If} $\rho_i\;(\text{mod}\;2)$ \textit{is rational then $i_1\leq i$.}
\end{lemme}

\begin{proof}
Let $i \in \{1,\dots, d\}$. Suppose that $\rho_i\;(\text{mod}\;2)$ is rational and that $i_1> i$.
We claim that this implies that $\rho_{i-1}\;(\text{mod}\;2)$ is also rational.

Indeed, let us denote by $\pi \in \text{Ch}^{n-i_1+1}(X_K^2)$ 
the \textit{$1$-primordial cycle} 
(see \cite[Definition 73.16]{EKM} and paragraph
right after \cite[Theorem 73.26]{EKM}, it is a rational cycle).
Even if it means adding a rational cycle to $\pi$, one can assume that $\pi$
decomposes as 
\[\pi=1\times l_{i_1-1}+l_{i_1-1}\times 1+\sum_{j=i_1}^{d-i_1+1}a_j\left( h^j \times l_{j+i_1-1}
+ l_{j+i_1-1}\times h^j\right)\]
for some $a_j\in \mathbb{Z}/2\mathbb{Z}$ (the fact that one can choose to make the previous sum start from $j=i_1$
is due to \cite[Proposition 73.27]{EKM}).
Since $i_1> i$, the computation of the composition of rational correspondences
$\left(\rho_i\;(\text{mod}\;2)\right)_K\circ \left((1\times h^{i_1-i})\cdot \pi\right)$ gives the identity
\[\left(\rho_i\;(\text{mod}\;2)\right)_K\circ \left((1\times h^{i_1-i})\cdot \pi\right)=1\times \left(\rho_{i-1}\;(\text{mod}\;2)\right)_K.\]
Therefore, pulling back the latter algebraic cycle with respect to the diagonal morphism
\[\begin{array}{rll}
X^i &\longrightarrow &X^{i+1} \\
(x_1,x_2,x_3,\dots,x_i) & \longmapsto & (x_1,x_1,x_2,x_3\dots,x_i)
\end{array}\]
(for example), one get that $\rho_{i-1}\;(\text{mod}\;2)$ is also a rational cycle.

It follows from the claim that $\rho_{1}\;(\text{mod}\;2)$ is rational. By \cite[Remark 80.9]{EKM}, this implies that $i_1=1$.
\end{proof}
The proposition is proved.
\end{proof}

%Since an anisotropic quadric $X$ is $2$-\textit{incompressible} (we refer to \cite{cd} for an introduction on %\textit{canonical dimension} and \textit{incompressibility}) if and only $i_1=1$ (see \cite[Theorem 90.2]%{EKM}),
%Proposition 4.1 with $i=2$ provides the following statement.

%\begin{cor}
%\textit{Assume $X$ anisotropic and that $z^2_{n-2}$ is rational, or equivalently that}
%\[\rho_2\;(\text{mod}\;2)=\text{sym}(1\times h\times l_0)\]
%\textit{is rational. Then $X$ is $2$-incompressible.}
%\end{cor}

\begin{rem}
Combining 
Proposition 4.1 with \cite[Propositon 2.5]{uINV}, one get that, representing 
the \textit{Elementary Discrete Invariant} $EDI(X)$ of the quadric $X$ as a 
$(d+1)\times (d+1)$ coordinate square (see \cite[Definition 2.3 and the paragraph right after]{uINV}), there is no marked integral node below
the $i_1$-th diagonal of this square. That is to say, the invariant $EDI(X)$ looks as

\[\begin{tikzpicture}[baseline=(current bounding box.center)]
\matrix (m) [matrix of math nodes,nodes in empty cells ]{
%\times  & \times &  & &  & & & \times  \\
d\;\; & \times  & & & & & & & \times  \\
  & & & & & & & &     \\
i_1 \;\; & \times & & & & &  &  &   \\
 & \circ & & & & & &  &    \\
  & & \circ  & & & & & &     \\
  & \circ  & & & & & & &  \\
0\;\; & \circ &\circ & & &\circ &\times &  & \times \\
} ;

\draw[loosely dotted] (m-4-2)-- (m-6-2);

\draw[loosely dotted] (m-7-3)-- (m-7-6);
\draw[loosely dotted] (m-5-3)-- (m-7-6);

\draw[loosely dotted] (m-3-2)-- (m-7-7);

\draw[loosely dotted] (m-1-2)-- (m-3-2);
\draw[loosely dotted] (m-1-2)-- (m-1-9);
\draw[loosely dotted] (m-1-9)-- (m-7-9);
\draw[loosely dotted] (m-1-2)-- (m-7-9);
\draw[loosely dotted] (m-7-7)-- (m-7-9);

\end{tikzpicture},\]
where $\circ$ means that the node is unmarked.

\noindent We recall that each row is associated with an orthogonal grassmannian, starting with the quadric at the bottom row, and that each integral node corresponds to an \textit{elementary classes} as defined by A.\,Vishik in \cite{uINV}, with codimension decreasing from left to right (the cycles $z^i_{n-i}\in \text{Ch}^{n-i}({G_i}_{K})$ correspond to the first column on the left).
\end{rem}

\section{Proof of the second part of Main Theorem}

We use notations and materials introduced in previous sections.
The fact that if $\rho_1$ is rational then $Z^1_{n-1}$ is also rational has been shown by A.\,Vishik in the first part of the proof of \cite[Theorem 4.4]{sov}.

\begin{prop}
 \textit{Let $i \in \{0,\dots, d\}$. If} $\rho_i\;(\text{mod}\;2)$ \textit{is rational then $z^i_{n-i}$ is also rational.}
\end{prop}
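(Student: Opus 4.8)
The plan is to prove the converse direction by essentially reversing the mechanism used in Corollary 3.15, together with the computational machinery already set up in Sections 2 and 3. Corollary 3.15 showed that when $z^i_{n-i}$ is rational, the correspondence $\alpha_i = (\theta_i)_\ast(Z^i_{n-i}) + \rho_i$ reduces modulo $2$ to $\Delta_i + \beta$, where $\beta$ is a sum of nonessential cycles (always rational) and $\Delta_i \pmod 2$ is rational by Lemma 2.1. The key observation is that the rationality of $z^i_{n-i}$ is exactly what makes $(\theta_i)_\ast(Z^i_{n-i}) \pmod 2$ rational, so that the identity $\alpha_i \equiv \Delta_i + \beta$ forces $\rho_i \pmod 2$ to be rational. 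For the converse, I would seek to extract $z^i_{n-i}$ from the rationality of $\rho_i \pmod 2$ by pushing forward along $\theta_i$ viewed as a correspondence in the opposite direction, i.e.\ using the transpose $\theta_i^t : X^{i+1} \rightsquigarrow G_i$, and showing that the image of (a suitable nonessential modification of) $\rho_i \pmod 2$ recovers $z^i_{n-i}$ up to rational cycles already available over $F$.

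First I would compute $(\theta_i)^t_\ast(\rho_i \pmod 2)$ in $\mathrm{Ch}(G_i)$, or more precisely the effect of composing $\rho_i \pmod 2$ with the correspondence $\theta_i$ to land back on the grassmannian $G_i$. Since $\rho_i = \mathrm{sym}\big((\times_{j=0}^{i-1} h^j) \times l_0\big)$ and $\theta_i$ records the incidence $\{(y, x_1, \dots, x_{i+1}) : x_1, \dots, x_{i+1} \in y\}$, the push-pull through $\theta_i$ should convert the product of hyperplane powers and the point class $l_0$ into a combination of the $W^i_\ast$ and $Z^i_\ast$ classes on $G_i$, by the same projection-formula and cellular-decomposition techniques used in Lemma 3.2 and Lemma 3.6. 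The goal is to isolate $z^i_{n-i}$ as the leading term, with all remaining contributions being either rational classes on $G_i$ defined over $F$ (such as the $w^i_j$, which are rational by \cite[Proposition 2.1]{uINV}) or classes whose rationality is already guaranteed. I expect the precise bookkeeping to require the decomposition (3.5) of $[\mathcal{F}(i,0)]$ into elementary and $W$-classes, together with the degree/intersection computations of Proposition 3.16, to confirm that the coefficient of $z^i_{n-i}$ in the resulting expression is a unit modulo $2$.

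An alternative, and possibly cleaner, route is to proceed by backward induction on $i$, paralleling the structure of the proof of Proposition 3.1 and invoking the already-established base case: the statement for $i=1$, namely that rationality of $\rho_1$ implies rationality of $Z^1_{n-1}$, is cited from A.\,Vishik's proof of \cite[Theorem 4.4]{sov}. For the inductive step, since rationality of $\rho_i$ implies rationality of $\rho_{i-1}$ is false in general (the implication in the introduction runs the other way, $\rho_i$ rational giving $\rho_j$ rational for $j > i$), I would instead relate the rationality of $z^i_{n-i}$ to that of $z^{i-1}_{n-i+1}$ using Lemma 2.6 and the Chern class identities, running the Whitney-sum argument of Proposition 3.1 in reverse to climb from the quadric up to $G_i$. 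The main obstacle I anticipate is the non-injectivity of push-forward: pushing $\rho_i \pmod 2$ back to $G_i$ may a priori mix $z^i_{n-i}$ with lower elementary classes $z^i_{n-i-m}$ whose rationality is not assumed, so the crux of the argument is the explicit degree computation (Proposition 3.16) showing that the relevant intersection numbers single out $z^i_{n-i}$ with odd multiplicity while the potentially problematic terms either vanish or are absorbed into rational cycles. Getting this separation exactly right, and verifying that the transpose correspondence $\theta_i^t$ is itself rational so that it preserves rationality under composition, is where the real work lies.
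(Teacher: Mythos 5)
There is a genuine gap, and it sits exactly where your proposal is vaguest. Your first route --- pushing $\rho_i\;(\text{mod}\;2)$ through the transpose $\theta_i^t:X^{i+1}\rightsquigarrow G_i$ --- gives zero, not $z^i_{n-i}$. Indeed $\theta_i$ is the product of the pullbacks of $[\mathcal{F}(i,0)]$ from each factor $G_i\times X$, so the transpose applied to an external product $\beta_1\times\cdots\times\beta_{i+1}$ is the product of the classes ${\pi_{(0,\underline{i})}}_{\ast}\circ\pi_{(\underline{0},i)}^{\ast}(\beta_j)$. For $\beta_j=h^j$ with $j\leq i-1$ this is trivial for dimensional reasons (the fibers of $\mathcal{F}(0,i)\rightarrow G_i$ have dimension $i$), so every summand of $\rho_i$ dies. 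Your proposed repair --- a ``nonessential modification of $\rho_i$'' --- cannot help: adding external products of powers of $h$ to $\rho_i$ only adds products of the rational classes $w^i_{\ast}$ to the output, and never produces $z^i_{n-i}$. The obstacle is therefore not, as you anticipate, that the pushforward mixes $z^i_{n-i}$ with lower elementary classes $z^i_{n-i-m}$ (those could only come from $l_m$-factors with $m>0$, which do not occur in $\rho_i$); it is that nothing survives at all. What is missing is a twist of the \emph{correspondence} rather than of $\rho_i$: one must first convert the classes $h^k$ ($k\leq i-1$) carried by $\rho_i$ into classes that pair nontrivially with $[\mathcal{F}(i,0)]$. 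The paper does this with the auxiliary rational correspondence $\theta'_i:X^2_K\rightsquigarrow\mathcal{F}(0,i)_K$ built from $\Delta_1\;(\text{mod}\;2)$ (rational by Lemma 2.1) and $[\mathcal{F}(0,i)]$; the cycle $\Delta_1$ acts as a duality sending $h^k\times l_0$ to $\pi_{(\underline{0},i)}^{\ast}(h^k)\cdot\pi_{(0,\underline{i})}^{\ast}(z^i_{n-i})$ and killing all other pairs. This ingredient, or an equivalent twisting of $\theta_i$ by suitable powers of $h$ on the source factors, is the actual content of the proof and is absent from your plan.

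Two further points. After the correct correspondence is applied one only obtains the rationality of $\text{sym}\left(\times_{j=0}^{i-2}h^{j}\right)\times z^i_{n-i}$ on $X^{i-1}_K\times {G_i}_K$; a second, iterative argument (intersecting with $h^k\times[G_i]\cdot[\mathcal{F}(0,i)]_K$ and pushing forward) is still needed to strip off the remaining factors of $X$ one at a time, and your proposal does not address this step. Your second route, a backward induction on $i$ modelled on Proposition 3.1, does not correspond to anything in the paper's argument for this direction (which is a direct construction for each $i$) and is not developed enough to assess; in particular it is unclear how rationality of $z^{i-1}_{n-i+1}$ would be fed into the inductive step, since neither $\rho_{i-1}$ nor $z^{i-1}_{n-i+1}$ is known to be rational under the hypothesis.
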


\begin{proof}
Since the conclusion is obvious for $i=0$, we assume that $i\geq 1$.
Consider the rational cycle
\[\theta'_i:=p_{\underline{X}\times X \times \underline{\mathcal{F}(0,i)}}^{\ast}\circ
\left(\text{Id}_X \times \pi_{(\underline{0},i)}\right)^{\ast}\left(\Delta_1\;(\text{mod}\;2)\right)\cdot
p_{X\times \underline{X} \times \underline{\mathcal{F}(0,i)}}^{\ast}\circ
\left(\text{Id}_X \times \pi_{(0,\underline{i})}\right)^{\ast}\left([\mathcal{F}(0,i)]_K\right)
\]
in $\text{Ch}^{2n-i}(X^2_K\times \mathcal{F}(0,i)_K)$, with 
$\Delta_1\in \text{CH}^{n}(X^{2}_K)$ the cycle introduced in \S 2.1.
We view $\theta'_i$ as a correspondence $X^2_K \rightsquigarrow \mathcal{F}(0,i)_K$.
Using decomposition (3.5), one get that
for any $(\alpha,\beta)\in 
\{1,h,\dots,h^{i-1},l_0\}^2$, with $\alpha\neq \beta$, one has
\[{\theta'_i}_{\ast}(\alpha\times \beta)=\left\{\begin{array}{ll} 
\pi_{(\underline{0},i)}^{\ast}(h^k)\cdot \pi_{(0,\underline{i})}^{\ast}(z^i_{n-i}) & \text{if}\:\: \alpha\times \beta =h^k\times l_0 \; ; \\
0 & \text{otherwise.} 
\end{array} \right.\]
Consequently, since $\rho_i\;(\text{mod}\;2)$ is assumed to be rational, one obtains that the cycle
\[\left(\text{Id}_{X^{i-1}} \times  {\theta'_i} \right)_{\ast}(\rho_i\;(\text{mod}\;2))=
\sum_{k=0}^{i-1}
\text{sym}\left(\times_{\substack{ j=0 \\ j\neq k}}^{i-1}
h^{j}\right) \times \left(\pi_{(\underline{0},i)}^{\ast}(h^k)\cdot\pi_{(0,\underline{i})}^{\ast}(z^i_{n-i})\right)\]
is rational. 
By multiplying the previous cycle by $[X^{i-1}]\times \pi_{(\underline{0},i)}^{\ast}(h)$
and then composing by $\left(\text{Id}_{X^{i-1}} \times \pi_{(0,\underline{i})}\right)_{\ast}$, one get, 
using the Projection Formula and Lemma 2.4(i), that the cycle
\[\text{sym}\left(\times_{j=0}^{i-2}h^{j}\right) \times z^i_{n-i}\]
is rational.
Therefore, it suffices to prove that, for any $2\leq k\leq i-1$, the rationality of $\text{sym}\left(\times_{j=0}^{i-k}h^{j}\right) \times z^i_{n-i}$ implies the rationality of $\text{sym}\left(\times_{j=0}^{i-k-1}h^{j}\right) \times z^i_{n-i}$ to conclude.
This follows from the next formula (which uses decomposition (3.5))

\begin{multline*}
{p_{\underline{X^{i-k}}\times X \times \underline{G_i}}}_{\ast}\left( [X^{i-k}]\times (h^k \times [G_i] \cdot [\mathcal{F}(0,i)]_K ) \cdot
\text{sym}\left(\times_{j=0}^{i-k}h^{j}\right) \times z^i_{n-i} \right)  = \\
  \text{sym}\left(\times_{j=0}^{i-k-1}h^{j}\right) \times z^i_{n-i}   \; . 
\end{multline*}

\end{proof}

\bibliographystyle{acm} 
\bibliography{references}

\begin{thebibliography}{1}

\bibitem{EKM}
{\sc Elman, R., Karpenko, N., and Merkurjev, A.}
\newblock {\em The algebraic and geometric theory of quadratic forms}, vol.~56
  of {\em American Mathematical Society Colloquium Publications}.
\newblock American Mathematical Society, Providence, RI, 2008.

\bibitem{sov}
{\sc Vishik, A.}
\newblock Symmetric operations (in russian).
\newblock In {\em Trudy Mat. Inst. Steklova 246\/} (2004), Algebr. Geom.
  Metody, Svyazi i Prilozh., pp.~92--105.
\newblock English transl: Proc. of the Steklov Institute of Math. 246, 79--92
  (2004).

\bibitem{GPQ}
{\sc Vishik, A.}
\newblock Generic points of quadrics and {Chow} groups.
\newblock {\em Manuscripta Math 122}, 3 (2007), 365--374.

\bibitem{uINV}
{\sc Vishik, A.}
\newblock Field of $u$-invariant $2^r+1$.
\newblock In {\em Algebra, arithmetic, and geometry: in honor of Yu. I. Manin.
  Vol. II}, vol.~270 of {\em Progr.Math}. Birkh{\"a}user Boston Inc., Boston,
  MA, 2009, pp.~661--685.

\end{thebibliography}
\end{document}